\documentclass[11pt, twoside]{article}

\usepackage{geometry}
\geometry{left=1.2in, right=1.2in, top=1.4in, bottom=1.2in}
\usepackage{amsmath, amsthm, amssymb, mathrsfs, upgreek}
\usepackage{graphicx, caption, float, multirow}
\usepackage{chngcntr, etoolbox}

\usepackage[hidelinks]{hyperref}
\usepackage[dvipsnames]{xcolor}
\hypersetup{colorlinks=true, linkcolor=BurntOrange, citecolor=BurntOrange}

\usepackage{fouriernc, anyfontsize}
\usepackage[T1]{fontenc}

\counterwithin*{equation}{section}
\counterwithin*{equation}{subsection}

\allowdisplaybreaks

\usepackage{fancyhdr}
\pagestyle{fancy}
\fancyhf{}
\fancyhead[LE]{\sl Characterization of nonconvex boundaries}
\fancyhead[RO]{\sl Ma, Xia, and Zhang}
\fancyfoot[C]{\thepage}

\setlength{\headheight}{13.6pt}

\newtheoremstyle{theorem}
  {10pt}
  {10pt}
  {\sl}
  {}
  {\bf}
  {. }
  { }
  {}
\theoremstyle{theorem}
\newtheorem{theorem}{Theorem}[section]
\newtheorem{assumption}[theorem]{Assumption}
\newtheorem{definition}[theorem]{Definition}
\newtheorem{lemma}[theorem]{Lemma}

\newtheorem{corollary}[theorem]{Corollary}
\newtheorem{remark}[theorem]{Remark}

\theoremstyle{definition}

\theoremstyle{definition}

\begin{document}
\title{\textbf{Characterizing nonconvex boundaries \\ via scalarization}}
\author{Jin Ma~\thanks{Department of Mathematics, University of Southern California. Email: \underline{jinma@usc.edu}. This author is supported in part by \text{U.S.} NSF grant DMS\#2510403.} \and Weixuan Xia~\thanks{Department of Mathematics, University of Southern California. Email: \underline{weixuanx@usc.edu}. Correspondence: 3620 S Vermont Ave, Los Angeles, CA 90089, U.S.} \and Jianfeng Zhang~\thanks{Department of Mathematics, University of Southern California. Email: \underline{jianfenz@usc.edu}. This author is supported in part by \text{U.S.} NSF grant DMS\#2510403.}}
\date{\today}

\maketitle

\thispagestyle{plain}

\begin{abstract}
  We present a unified approach for characterizing the boundary of a possibly nonconvex domain.
  Motivated by the well-known Pascoletti--Serafini method of scalarization, we recast the boundary characterization as a multi-criteria optimization problem with respect to a local partial order induced by a spherical cone with varying orient. Such an approach enables us to trace the whole boundary and can be considered a general dual representation for arbitrary (nonconvex) sets satisfying an exterior cone condition. We prove the equivalence between the geometrical boundary and the scalarization-implied boundary, particularly in the case of Euclidean spaces and two infinite-dimensional spaces for practical interest. By reformulating each scalarized problem as a parameterized constrained optimization problem, we shall develop a corresponding numerical scheme for the proposed approach. Some related applications are also discussed.
  \medskip\\
  \textsc{MSC2020 Classifications:} 90C26; 90C29; 93E20 \medskip\\
  \textsc{Key Words:} Nonconvex boundary; multi-criteria optimization; local scalarization; dual representation; dimensionality reduction
\end{abstract}

\newcommand{\dd}{{\rm d}}
\newcommand{\pd}{\partial}
\newcommand{\PP}{\mathbb{P}}
\newcommand{\E}{\mathbb{E}}
\newcommand{\0}{\mathbf{0}}
\newcommand{\Int}{\mathrm{int}}
\newcommand{\cl}{\mathrm{cl}}
\newcommand{\co}{\mathrm{co}}

\medskip

\section{Introduction}\label{sec:1}

Characterizing the boundary of a general (potentially highly nonconvex) domain in a topological space -- by identifying all of its boundary points -- is a fundamental task across various fields, which aids in the analysis of solution existence, stability, efficiency, and generalization. Broadly speaking, understanding a set's boundary provides crucial structural information, particularly in optimization, where it understandably reduces the effective dimensionality of the problem at hand.

The analysis of the evolution of sets (or set-valued functions) and their geometrical properties forms the core of many problems in applied mathematics. Well-known examples include
stochastic viability problems (see [Aubin--Da Prato, 1998] \cite{ADP98}), multivariate super-hedging problems ([Kabanov, 1999] \cite{K99} and [Bouchard--Touzi, 2000] \cite{BT00}), multivariate dynamic risk measures ([Feinstein--Rudloff, 2015] \cite{FR15}), time inconsistent optimization problems ([Karman--Ma--Zhang, 2017] \cite{KMZ17}), stochastic target problems ([Soner--Touzi, 2002, 2003] \cite{ST02, ST03}), nonzero sum games and mean field games (e.g., [Feinstein--Rudloff--Zhang, 2022] \cite{FRZ22} and [\.{I}\c{s}eri--Zhang, 2024a] \cite{IZ24a}), as well as
multi-criteria stochastic control or optimization problems (see, e.g., [\.{I}\c{s}eri--Zhang, 2024b] \cite{IZ24b} and [Xia, 2024] \cite{X24}). The characterization of the boundaries of general geometric domains holds high relevance across various other areas. This includes 
its established role in the study of surface evolution equations (see, e.g., [Sethian, 1985] \cite{S85}, [Evans--Spruck, 1991] \cite{ES91}, [Soner, 1993] \cite{S93}, [Barles--Soner--Souganidis, 1993] \cite{BSS93}, and the monograph [Giga, 2006] \cite{G06} for detailed exposition),
and, more recently, its profound connections to machine learning -- especially for training deep neural networks by uncovering the spatial structure of loss landscapes, or the so-named ``neuro-manifolds'' ([Li et al., 2018] \cite{LXTSG18}, [Calin, 2020] \cite{C20}, and [Marchetti et al., 2025] \cite{MSMTK25});\footnote{These landscapes are often highly complex due to the compositional nature of activation functions, and their analysis primarily focuses on understanding the geometry of the objective space under the neural network map to gain insights into the model training process.} similarly, in constrained reinforcement learning, where the space of feasible policies can be exceedingly nonconvex subject to complex environments, a characterization of the boundary of the policy space has the potential to significantly reduce tentative search regions for training (see, e.g.,
[Hambly--Xu--Yang, 2023] \cite{HXY23} and [Milani et al., 2024] \cite{MTVF24}).

In all of these applications, the set (domain) under examination, whether finite- or infinite-dimensional, generally lacks guaranteed convex properties. The main goal of the present paper is then to provide a representation for the boundary of such a set, leveraging techniques from multi-criteria optimization. Our main result gives rise to a dual representation of nonconvex boundaries with only a minor nonrestrictive exterior cone condition, which is of independent interest and, to the best of our knowledge, is novel.


More specifically, our boundary characterization approach relies on multi-criteria optimization with respect to some partial order induced by a cone (\text{a.k.a.} an ordering cone). In essence, the search of any boundary point of a set leads to a multi-criteria optimization problem within one of its subsets -- as a suitably constrained problem. If the set happens to be convex, this subset necessarily coincides with the set itself, and the classical (weighted-sum) scalarization method of Gass--Saaty ([Gass--Saaty, 1955] \cite{GS55}) implies that the entire boundary can be characterized by considering its support functions.

The Pascoletti--Serafini scalarization method ([Pascoletti--Serafini, 1984] \cite{PS84}), on the other hand, offers the flexibility to handle potential nonconvexity. In a usual fashion, when applied with a fixed ordering cone, it can identify Pareto-optimal points, even those lying on nonconvex frontiers of the objective space. If we further allow the orient of this cone to vary, this method has the potential to exhaust all such boundary points. In the convex case, these ordering cones can simply be taken as half-spaces, making the Pascoletti--Serafini scalarization a natural generalization that sharpens the cone to detect optimal points in nonconvex regions as well.

However, the standard (infinite) ordering cones used in the Pascoletti--Serafini method impose a restriction on the degree of nonconvexity the set can exhibit. Specifically, points on ``too nonconvex'' boundaries can remain undetected -- technically, when the recession cone of the complement is the singleton $\{\0\}$. To give an example in 2D, as Figure \ref{fig:1} shows, the boundary of the left domain can be completely characterized using such standard cones with a varying orient, whereas that of the right one cannot -- in particular, it is not possible to capture the bolded part of the boundary due to necessary intersection with these (standard) cones (such as the orange one). To overcome this limitation, we introduce a localization method by employing spherical cones (which are bounded), so that the optimization problem can be formulated and solved in a strictly local sense. It turns out that such a method enables us to effectively capture nonconvex boundary points without undesirably extending into the enclosed regions.

\begin{figure}[H]
  \centering
  \includegraphics[scale=0.5]{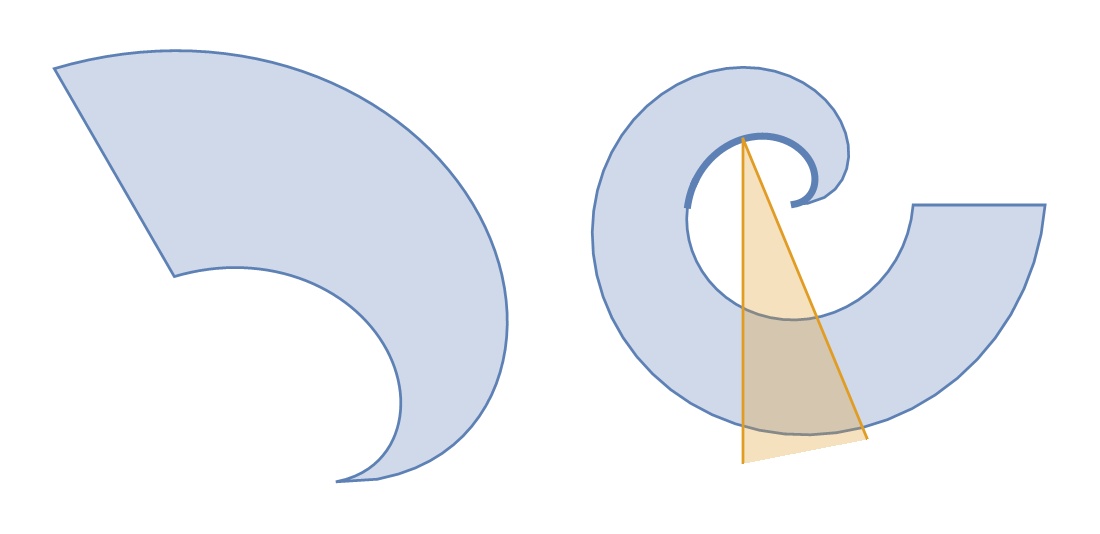}\\
  \caption{Exemplary nonconvex domains in 2D (I)}
  \label{fig:1}
\end{figure}

There is a long strand of literature concerning multi-criteria optimization with a focus on developing scalarization techniques to cope with possible nonconvexity. We refer to the comprehensive surveys [Ehrgott, 2005] \cite{E05}, [Eichfelder, 2008] \cite{E08}, and [Jahn, 2011] \cite{J11} and highlight alternative formulations in a set-valued, lattice-based framework in [Hamel et al., 2015] \cite{HHLRS15}. The main reason to consider the Pascoletti--Serafini type scalarization lies in its overall flexibility: It allows for the choice of an ordering cone and provides control over tradeoff direction via a reference point. It can handle nonconvexity while capturing all Pareto-optimal points, unlike the Gass--Saaty method, which is limited to convex fronts. The Pascoletti--Serafini method is often computationally less complex than the $\varepsilon$-constraint method, which requires iterative adjustments. Compared to Chebyshev scalarization, the Pascoletti--Serafini method operates directly on the natural scale of the objective space and does not require re-scaling or weight tuning. A detailed comparison can be found in [Kasimbeyli et al., 2019] \cite{KOKYE19}.


By providing dual representations of nonconvex boundaries, this work also runs alongside the literature on theories of nonconvex duality. For convex optimization problems, dual problems can be directly formulated with conjugate functions based on the Fenchel--Legendre transformation, and strong duality in the sense of a zero-duality gap can be established under certain stability conditions ([Rockafellar, 1974] \cite{R74}). In the absence of convexity, there have been numerous attempts over the years to extend this framework and establish similar strong duality results -- e.g., [Balder, 1977] \cite{B77}, [Toland, 1978] \cite{T78}, [Mart\'{\i}nez-Legaz and Volle, 1999] \cite{M-LV99}, [Azimov--Gasimov, 2002] \cite{AG02}, and [Sharikov, 2009] \cite{S09}. This extension is also heavily tied to the so-called ``general augmented Lagrangian functions,'' as comprehensively studied in [Rockafellar--Wets, 2009] \cite{RW09} and [Yang--Huang, 2001] \cite{YH01}; see also [Flores-Bazan--Mastroeni, 2013] \cite{F-BM13} and [Yalcin--Kasimbeyli, 2024] \cite{YK24} for recent developments in the context of constrained optimization.

Our results can be viewed as an extension of existing work on nonconvex duality by generalizing the (single-valued) objective function to a multi-valued function, meaning that the objective function can be locally one-to-many, allowing for regionally multiple optimal values. The basic idea is that the graph of every such (multi-valued) objective function defined in $\mathbb{R}^{m}$ can be considered the local boundary of a subset of $\mathbb{R}^{m+1}$. For infinite-dimensional spaces, the same property holds in terms of product spaces; see, e.g., [Borges, 1967, \text{Def.} 2.4] \cite{B67}.


We should point out that our proposed boundary characterization is valid for both finite-dimensional and infinite-dimensional spaces, despite a reasonable emphasis on the former. In particular, in the finite-dimensional setting, we shall demonstrate a specific dimensionality reduction technique within the space of parameters for compact domains, which simplifies the characterization computationally -- a feature that has significant implications
for numerical implementation. In the infinite-dimensional setting, we shall address some key subtleties arising from the lack of reflexivity and show how the characterization remains effective via a parametric link to the topological dual space -- a connection not readily inferred from the finite-dimensional case.

Finally, it is worth mentioning that our results bear important connections to recently noted machine learning applications, especially in constrained reinforcement learning. More specifically, we are able to characterize the boundary of the set of all admissible policies satisfying possibly complex structured constrains of the environment. While this conceptual quantification has only recently been observed, finding the boundary of the policy space is a rather tall task. From what we can gather, our work presents the first such result in the machine learning literature.

The rest of the paper is organized as follows. In Section \ref{sec:2}, we introduce some general notations and basic concepts in geometry, functional analysis, and multi-criteria optimization. In Section \ref{sec:3}, we introduce the scalarization-implied boundary and establish its equivalence to the definitional boundary, in finite-dimensional spaces. In Section \ref{sec:4}, we discuss some dimensionality reduction techniques and reformulate the scalarized problems in a way that standard optimization methods can be applied. Section \ref{sec:5} presents extended results to infinite-dimensional spaces. Finally, in Section \ref{sec:6}, we provide more detailed discussion on related applications of the boundary characterization approach, building on the concepts introduced earlier. Conclusions are drawn in Section \ref{sec:7}.

\medskip

\section{Preliminaries}\label{sec:2}

Consider a Euclidean space $\mathbb{R}^{m}$ ($m\geq1$), for which $\langle\cdot,\cdot\rangle$ and $\|\cdot\|$ denote the inner product and the Euclidean norm, respectively. Let $\mathrm{Cl}(\mathbb{R}^{m})$ denote the space of all nonempty closed subsets of $\mathbb{R}^{m}$ and $\mathrm{Conv}(\mathbb{R}^{m})$ denote the space of all convex subsets in $\mathrm{Cl}(\mathbb{R}^{m})$. For a domain $D\subset\mathbb{R}^{m}$,
we denote by $\cl D$ and $\Int D$ the closure and interior of $D$ in $\mathbb{R}^{m}$, respectively. Then, the \textsl{boundary} of $D$ is defined as $\pd D:=\cl D\setminus\Int D$. Clearly, if
$f\in\pd D$ and $O$ is any neighborhood of $f$, then $D\cap O\neq\emptyset$ and $D\cap O^{\complement}\neq\emptyset$, where $O^{\complement}$ denotes the complement of $O$. Finally, $\overline{B}(r)=\{\beta\in\mathbb{R}^{m}:\|\beta\|\leq r\}$, with $r\in(0,\infty]$, denotes the closed ball centered at $\0$ with radius $r$.

A \textsl{convex cone} $K\subset\mathbb{R}^{m}$ is a subset satisfying that $\alpha(f+g)\in K$ for all $\alpha\geq0$ and $f,g\in K$. It is said to be \textsl{pointed} if $K\cap(-K)=\{\0\}$. More specifically, for a unit vector $\nu\in \overline{B}(1)$ and a constant $\eta\in(0,1]$, we define a \textsl{circular cone} (\text{a.k.a.} ice cream cone) as
\begin{equation}\label{2.1}
  K_{\nu}(\eta):=\{\beta\in\mathbb{R}^{m}:\; (1-\eta)\|\beta\|\leq\langle\beta,\nu\rangle\},
\end{equation}
which has orient (or axis vector) $\nu$ and aperture (or half-angle) equal to $\arccos(1-\eta)$. It is clear that $K_{\nu}(\eta)$ is a pointed convex cone; it is also closed and \textsl{solid} in the sense that $\cl K_{\nu}(\eta)=K_{\nu}(\eta)$ and $\Int K_{\nu}(\eta)\neq\emptyset$, provided $\eta>0$. We then introduce the following definition.
\begin{definition}\label{def:1}
For a circular cone $K_{\nu}(\eta)$ defined in (\ref{2.1}), we define the corresponding \textsl{spherical cone} (or hypercone) with radius $r\in(0,\infty]$ by $K^r_\nu(\eta):=\overline{B}(r)\cap K_{\nu}(\eta)$.
\end{definition}

Obviously, the spherical cone in Definition \ref{def:1} is a solid, truncated convex cone, except for $r=\infty$, as $K_\nu^\infty(\eta)=K_\nu(\eta)$.
Besides, the circular cone $K_{\nu}(\eta)$
is not a polyhedral cone (i.e., generated by a finite set of vectors) for $m\geq3$, see, e.g., 
\cite[Lemma 2.2]{E09}.\footnote{In particular, for $m\geq3$, (\ref{2.1}) does not include the positive cone $\mathbb{R}^{m}_{+}$ as a special case.}

Given a circular cone $K_{\nu}(\eta)$, we  define a partial order $\succeq\equiv\succeq_{K_{\nu}(\eta)}$ between vectors $f,g\in D$, denoted by $f\succeq g$ (or $f\succeq_{K_{\nu}(\eta)}g$), whenever $f-g\in K_{\nu}(\eta)$.
Accordingly,  we can introduce a ``local partial order'', denoted by $\succeq_r\equiv \succeq_{K^r_{\nu}(\eta)}$, $r\in(0,\infty]$, by a slight abuse of notation. Clearly, for any $f,g\in D$, $f\succeq_r g$ means that $f\succeq g$
and $\|f-g\|\leq r$.

Most often, if we think of these vectors as objective function values from some (possibly stochastic) optimization problem with control variable $x\in\mathcal{A}$, for a known admissibility set $\mathcal{A}$ of a certain function space, with $D=\cl\{f(x):\; x\in\mathcal{A}\}$, then the partial order can be naturally associated with a multi-criteria optimization problem to
\begin{equation}
\label{2.2}
  \sup_{x\in\mathcal{A}}f(x)\equiv\sup_{x\in\mathcal{A}}(f_{1}(x),\dots,f_{m}(x))^{\intercal}
\end{equation}
with respect to the local partial order $\succeq_r$ for some $r\in(0,\infty]$. The notion of maximality with respect to $\succeq_r$ is formalized below.

\begin{definition}\label{def:2}
For the spherical cone $K^r_{\nu}(\eta)$ in Definition \ref{def:1}, a control $x^{\ast}\in\mathcal{A}$ is said to be a (weakly) $K^r_{\nu}(\eta)$-maximal solution to the problem (\ref{2.2}) if $(f(x^{\ast})+\breve{K}^r_{\nu}(\eta))\cap D=\emptyset$, where $\breve{K}^r_{\nu}(\eta):=\overline{B}(r)\cap\Int K_{\nu}(\eta)$.
\end{definition}

The weak $K^r_{\nu}(\eta)$-maximality in Definition \ref{def:2} carries a meaning of (constrained) Pareto optimality: i.e., there is no other control $x\in D\setminus\{x^{\ast}\}$ such that $f(x)-f(x^{\ast})\in\breve{K}^r_{\nu}(\eta)$. Note also that $\breve{K}^r_{\nu}(\eta)\neq\Int K^r_{\nu}(\eta)$ in general, and thus the (weak) maximality is governed by $\succeq$ induced by $K_{\nu}(\eta)$ in (\ref{2.1}) in the usual sense -- not by the constraint (or truncation) by $\overline{B}(r)$.\footnote{It is possible to consider strong maximality by replacing this condition with $(f(x^{\ast})+ K^r_{\nu}(\eta))\cap D=\{f(x^{\ast})\}$, but emphasis will be on the weak notion.}

\medskip

\section{Boundary characterization by way of scalarization}\label{sec:3}

For every spherical cone based on Definition \ref{def:1}, the maximal solutions to (\ref{2.2}) can be recovered via scalarization of the objective function $f:\mathcal{A}\mapsto\mathbb{R}^{m}$, with $D=\cl\{f(x):\; x\in\mathcal{A}\}\in\mathrm{Cl}(\mathbb{R}^{m})$. We make the following regularity assumption on the boundary $\pd D$ throughout this section.
\begin{assumption}\label{as:1}
There exist constants $r>0$ and $\eta\in(0,1]$ such that for every $f\in\pd D$,
\begin{equation}\label{3.1}
  (f+\breve{K}^r_{\nu}(\eta))\cap D=\emptyset,
\end{equation}
for some $\nu\equiv\nu_{f}\in\pd\overline{B}(1)$, where $\pd\overline{B}(1)=\{\beta\in\mathbb{R}^{m}:\;\|\beta\|=1\}$ is the unit sphere, and $\breve{K}^r_{\nu}(\eta)$ is as given in Definition \ref{def:2}.
\end{assumption}

Condition (\ref{3.1}) is an exterior cone condition; see, e.g.,
\cite[\S4]{CS05}. It means that for any boundary point $f$, there is a (sufficiently small, partially open) spherical cone $\breve{K}^r_{\nu}(\eta)$ and a neighborhood $O\ni f$ such that the shifted cone $f+\breve{K}^r_{\nu}(\eta)$ is disjoint from $D$ but contained in $O$. Such a condition is notably nonrestrictive as it allows $D$ to be generally nonconvex, infinite, or even disconnected. In particular, if $D\in\mathrm{Cl}(\mathbb{R}^{m})$ is convex, then (\ref{3.1}) automatically holds with any $r>0$ and $\eta\in(0,1]$. Every Lipschitz (including smooth) domain $D$ satisfies (\ref{3.1}) (see, e.g.,
\cite[\text{Def.} 8 \& \text{Def.} 9]{BBK13}) -- and so does every (not necessarily convex) polytope. Note also that the choice of the unit vector $\nu\equiv\nu_{f}$ generally depends on the boundary point $f\in\pd D$ and can vary within every neighborhood $O\ni f$, meaning that this cone condition need \textsl{not} be uniform.\footnote{This is in contrast to the uniform interior cone condition considered in
\cite[\text{Condition} ($B'$)]{S87} concerning Skorohod equations; see also
\cite[\text{Def.} 8]{BBK13}.} What Assumption \ref{as:1} specifically disallows are nonconvex sets which have singularities formed by the epigraph of non-Lipschitz functions. Plainly speaking, it prevents any inward-pointing corners on $\pd D$ from being overly sharp. As another example, Figure \ref{fig:2} below shows three nonconvex domains for $m=2$, of which the left two satisfy Assumption \ref{as:1} but the rightmost one does not (due to the cusp singularity). Moreover, it is obvious that more irregular boundaries are associated with smaller (necessary) values of $r$ and $\eta$.

\begin{figure}[H]
  \centering
  \includegraphics[scale=0.5]{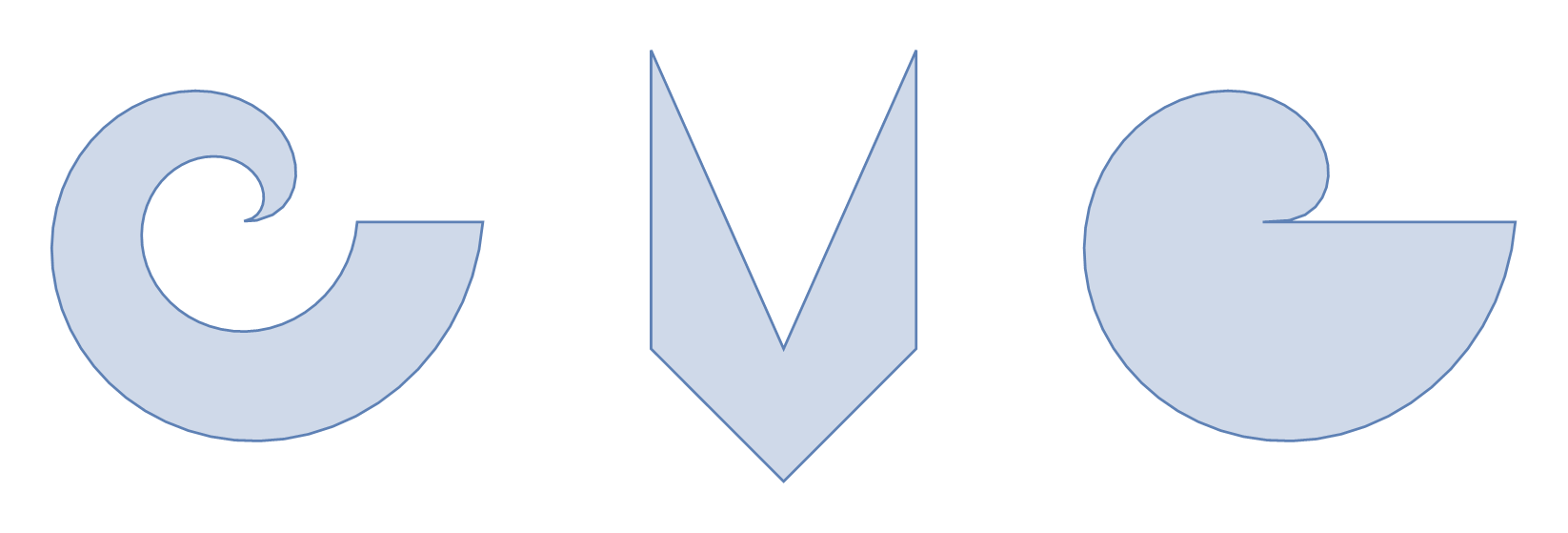}\\
  \caption{Exemplary nonconvex domains in 2D (II)}
  \label{fig:2}
\end{figure}

In order to see how to recover the boundary $\pd D$ by way of scalarization,
we start by outlining the basic intuition via a geometric illustration in 2D, as shown in Figure \ref{fig:3}, which is based on the leftmost domain ($D$) from Figure \ref{fig:2}.
Here, the domain $D$ satisfies condition (\ref{3.1}) with $r=1$ and $\eta=1-\cos(\pi/8)$, and the orange sectors represent the same (closed) spherical cone $K^{1/3}_{b}(1-\cos(\pi/8))$ with fixed orient $b=(-1/\sqrt{2},1/\sqrt{2})^{\intercal}$. Under the standard Pascoletti--Serafini method (see \cite[\S3.5]{KOKYE19}, \cite{PS84}), the standard (infinite) ordering cone $K_{b}(1-\cos(\pi/8))$ (without truncation) is to be shifted -- starting from a point in $D$ -- in the direction of $b$ (along the dashed ray) by a maximal distance until it is out of $D$. Notably, from Figure \ref{fig:3}, while the outer boundary is readily identifiable in this fashion, detecting the inner boundary requires constraining the maximal distance to prevent it from converging directly to the outer one. Meanwhile, it is clear that the spherical cone $K^{1/3}_{b}(1-\cos(\pi/8))$ should be used in place of $K_{b}(1-\cos(\pi/8))$ because the latter (infinite), when placed at the inner boundary, will necessarily intersect $D$ (see Figure \ref{fig:1}). However, imposing this constraint alone introduces a potential side effect: If the constraint is binding and when the starting point is deeply within $D$, then the maximization is likely to end up in an interior point. Therefore, it is critical to also require that a marginal relaxation of the constraint does not yield a strictly larger maximal value. 

\begin{figure}[H]
  \centering
  \begin{minipage}[c]{0.5\linewidth}
  \includegraphics[scale=0.3]{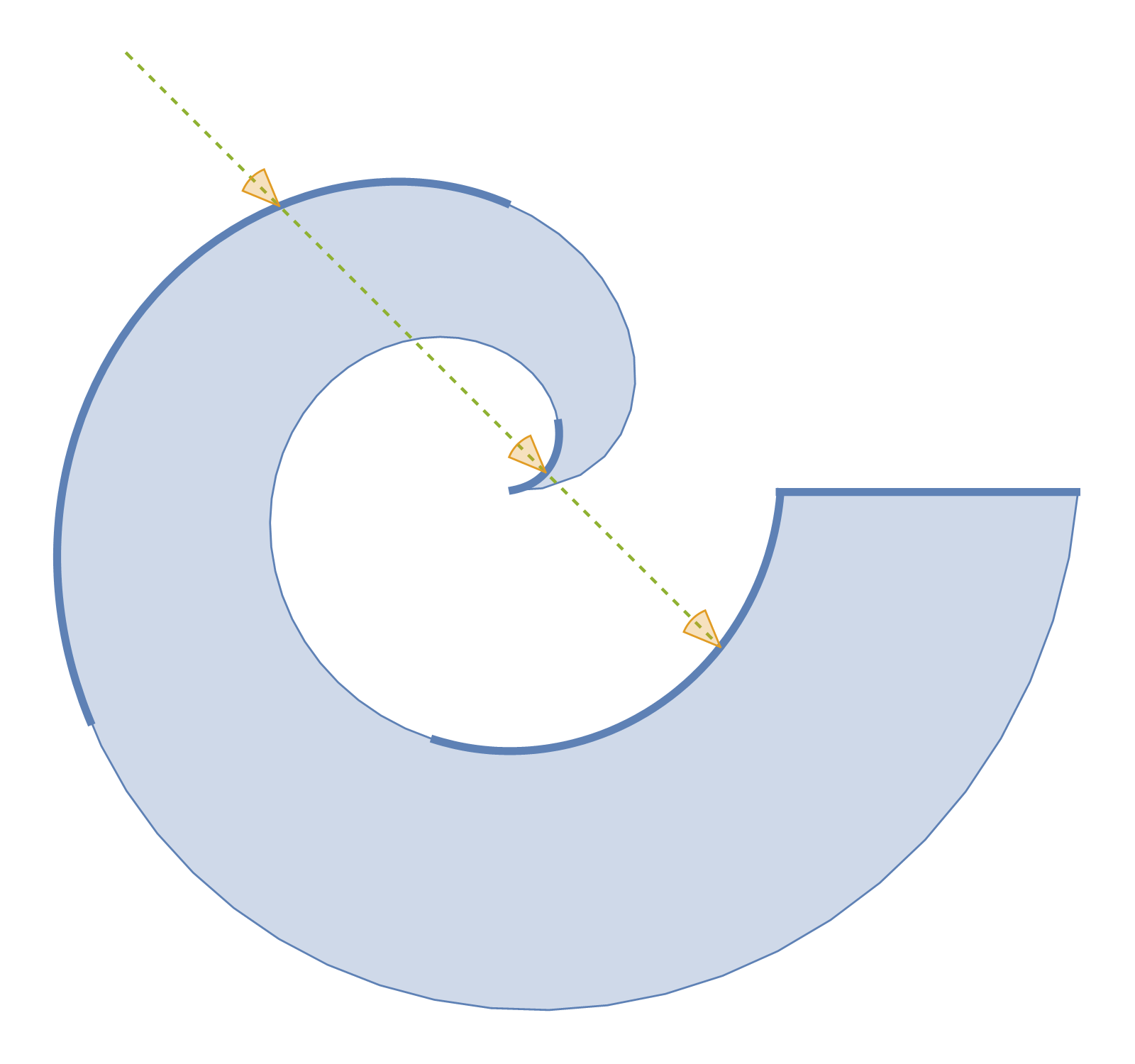}
  \end{minipage}
  \begin{minipage}[r]{0.2\linewidth}
  \includegraphics[scale=0.35]{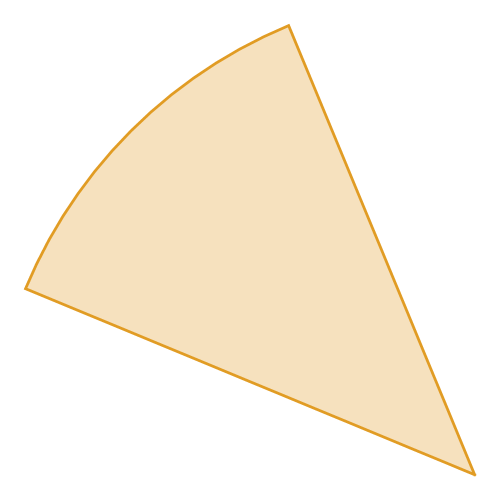}
  \caption*{\small (spherical cone)}
  \end{minipage}
  \caption{Illustration of boundary characterization via scalarization}
  \label{fig:3}
\end{figure}

We shall formalize the above intuition by introducing a sequence of nonlinear \textsl{scalarization functions} of Pascoletti--Serafini type, together with their related optimization problems. For each $k\in\mathbb{Z}_{++}$ and $f\in D$, we consider the following (leveled) \textsl{scalarization sets}, parameterized by $a\in\mathbb{R}^{m}$ (base, or reference point) and $b\in\pd\overline{B}(1)$ (orient):
\begin{equation}\label{3.3}
  \mathfrak{S}^{(k)}_{a,b}(f)=\{y\in[0,k\epsilon]:\; f-a-yb\in K^{\epsilon}_{b}(\eta)\},\quad f\in D,\;k\in\mathbb{Z}_{++},\;\epsilon=\frac{r}{3},
\end{equation}
where $r>0$ is the constant in Assumption \ref{as:1}. Then, we define the scalarization functions
\begin{equation}\label{3.2}
  \phi^{(k)}_{a,b}(f):=\sup_{y\in\mathfrak{S}^{(k)}_{a,b}(f)}y,\quad k\in\mathbb{Z}_{++},
\end{equation}
and denote the ``value functions'' of the corresponding optimization problems by
\begin{equation}\label{3.5}
  V^{(k)}_{a,b}=\sup_{f\in D}\phi^{(k)}_{a,b}(f),\quad k\in\mathbb{Z}_{++}.
\end{equation}

Next, we introduce the \textsl{scalarization-implied boundary} associated with the optimization problem in (\ref{3.5}):
\begin{equation}\label{3.4}
  \pd_{\rm s}D:=\big\{f\in D:\; \phi^{(k)}_{a,b}(f)=V^{(k)}_{a,b},\;\exists a\in\mathbb{R}^{m},\;b\in\pd\overline{B}(1),\;k\in\mathbb{Z}_{++}\text{ s.t. }V^{(k+1)}_{a,b}=V^{(k)}_{a,b}\big\}.
\end{equation}

\begin{remark}\label{rem:1}
For fixed $a\in\mathbb{R}^{m}$, $b\in\pd\overline{B}(1)$, and $k\in\mathbb{Z}_{++}$, equations (\ref{3.2}) and (\ref{3.5}) indeed constitute a scalarized problem of Pascoletti--Serafini type. It consists of moving the (spherical) cone $\breve{K}^\epsilon_{b}(\eta)$ along the line segment $\{f-a-yb:\; y\in[0,k\epsilon]\}$ in the direction $b$ by a maximum distance $y^*=y^*(f)$, then maximizing $y^*$ by varying $f\in D$. For this purpose, the spherical cone $\breve{K}^{r}_{\nu}(v)$ in condition (\ref{3.1}) uses the closed ball $\overline{B}(r)$ (hence partially open) to exclude cases where $(f+\breve{K}^r_{\nu}(\eta))\cap D=\{g\}\subset\pd D$ with $g\neq f$. Then, in constructing (\ref{3.4}), the constraint $V^{(k+1)}_{a,b}=V^{(k)}_{a,b}$ precisely reflects the idea that for any boundary point $f\in\pd D$, extending the above line segment by a marginal length of $\epsilon$ does not lead to a longer possible distance $y$. The choice $\epsilon=r/3$ in defining the scalarization sets in (\ref{3.3}) is made for technical reasons and will be justified later (in step 1 in the proof of Theorem \ref{thm:1}). 
\qed
\end{remark}


Figure \ref{fig:2} also helps illustrate (\ref{3.4}): Using only the (fixed) spherical cone $K^{1/3}_{b}(1-\cos(\pi/8))$, while varying the base $a\in\mathbb{R}^{2}$, one can recover the bolded part of the boundary. On the contrary, without using the spherical cone or the level index $k\in\mathbb{Z}_{++}$, a good part of the inner boundary can never be found.


The main result of this paper is the following theorem regarding the equivalence of (\ref{3.5}) to the original boundary.

\begin{theorem}\label{thm:1}
Under Assumption \ref{as:1}, it holds that $\pd D=\pd_{\rm s}D$.
\end{theorem}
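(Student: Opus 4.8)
The plan is to establish the double inclusion $\pd D \subseteq \pd_{\rm s} D$ and $\pd_{\rm s} D \subseteq \pd D$ separately, with the forward inclusion being the substantive direction where Assumption \ref{as:1} is used in full force.

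For the inclusion $\pd D \subseteq \pd_{\rm s} D$, fix $f \in \pd D$. By Assumption \ref{as:1} there is a unit vector $\nu = \nu_f$ with $(f + \breve{K}^r_\nu(\eta)) \cap D = \emptyset$. The first step is to set $b := \nu$ and then choose the base point $a$ so that $f$ sits at the correct spot on the segment $\{f - a - yb : y \in [0, k\epsilon]\}$; a natural choice is $a := f - y_0 b$ for a suitable $y_0 \in [0, k\epsilon]$, which makes $f$ itself feasible (i.e.\ $f - a - y_0 b = \0 \in K^\epsilon_b(\eta)$, so $y_0 \in \mathfrak{S}^{(k)}_{a,b}(f)$) and forces $\phi^{(k)}_{a,b}(f) \ge y_0$. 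The crux is then a two-part argument: (i) \emph{$f$ is optimal}, i.e.\ $\phi^{(k)}_{a,b}(f) = V^{(k)}_{a,b}$ — this should follow because any $g \in D$ with $\phi^{(k)}_{a,b}(g) > y_0$ would have to satisfy $g - a - yb \in K^\epsilon_b(\eta)$ for some $y > y_0$, which rearranges to $g \in f + (y - y_0)b + K^\epsilon_b(\eta) \subseteq f + \breve{K}^r_\nu(\eta)$ once the truncation radii are handled (here is where $\epsilon = r/3$ matters: one needs the sum $(y-y_0)b + K^\epsilon_b(\eta)$, with $y - y_0$ bounded by a controlled amount, to land inside $\breve{K}^r_\nu(\eta)$, and the $1/3$ budget is presumably split between the shift along $b$, the truncated-cone radius, and slack to keep things strictly in the interior $\breve K$ rather than the closed $K$), contradicting $(f + \breve{K}^r_\nu(\eta)) \cap D = \emptyset$; and (ii) \emph{the level constraint stabilizes}, i.e.\ $V^{(k+1)}_{a,b} = V^{(k)}_{a,b}$ for the chosen $k$ — by the same cone-exclusion reasoning, increasing the level bound from $k\epsilon$ to $(k+1)\epsilon$ cannot produce a larger feasible $y$ anywhere in $D$, because the obstruction is $D$ itself intersecting the shifted cone, not the level cap. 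Choosing $k$ small (e.g.\ $k = 1$, with $a$ placed so $f$ is near the far end of the segment) should make (ii) automatic.

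For the reverse inclusion $\pd_{\rm s} D \subseteq \pd D$, take $f \in \pd_{\rm s} D$ with witnessing data $a, b, k$ satisfying $\phi^{(k)}_{a,b}(f) = V^{(k)}_{a,b}$ and $V^{(k+1)}_{a,b} = V^{(k)}_{a,b}$. Since $f \in D \subseteq \cl D$, it suffices to show $f \notin \Int D$. Suppose for contradiction $f \in \Int D$, so a small ball $\overline B(\delta) + f \subseteq D$. Let $y^* = \phi^{(k)}_{a,b}(f)$. The idea is that being in the interior lets us push the cone a little further: the point $g := f + \rho b$ for small $\rho > 0$ still lies in $D$ (for $\rho \le \delta$), and a short computation with \eqref{3.3} shows $g - a - (y^* + \rho)b = f - a - y^* b \in K^\epsilon_b(\eta)$, so provided $y^* + \rho \le (k+1)\epsilon$ we get $\phi^{(k+1)}_{a,b}(g) \ge y^* + \rho > y^* = V^{(k)}_{a,b} = V^{(k+1)}_{a,b}$ — contradiction. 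One must also address the boundary case where the level constraint at stage $k$ is binding, i.e.\ $y^* = k\epsilon$; but then $V^{(k+1)}_{a,b} = V^{(k)}_{a,b} = k\epsilon$ means even with one more unit of slack nothing in $D$ beats $k\epsilon$, and the interior-pushing argument applied at level $k+1$ (with room up to $(k+1)\epsilon$) again yields a value $> k\epsilon$, a contradiction. Thus $f \in \pd D$.

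The main obstacle I anticipate is the bookkeeping in step (i) of the forward direction: converting the feasibility relation $g - a - yb \in K^\epsilon_b(\eta)$ with $y$ in some range into the clean containment $g \in f + \breve{K}^r_\nu(\eta)$. This requires (a) verifying that a Minkowski sum of a $b$-directed shift with a radius-$\epsilon$ truncated circular cone is contained in a radius-$r$ truncated circular cone with the \emph{same} axis — a convexity/triangle-inequality estimate on norms, where the choice $\epsilon = r/3$ must be shown to suffice uniformly over the relevant range of shifts; and (b) upgrading closed-cone membership to \emph{open}-cone ($\breve K$, i.e.\ interior of $K_\nu$ intersected with $\overline B(r)$) membership, which needs a strict inequality and hence a genuinely positive shift $y - y_0 > 0$ — this is exactly why the optimality argument is a strict-improvement contradiction rather than a non-strict one, and care is needed so that the degenerate case $g = f$ (zero shift) is handled by the "$g \neq f$" exclusion built into Assumption \ref{as:1} via the closed ball $\overline B(r)$, as flagged in Remark \ref{rem:1}. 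I would isolate estimate (a) as a standalone geometric sublemma about truncated circular cones before assembling the main argument.
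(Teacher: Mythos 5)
Your proposal is correct and its overall architecture coincides with the paper's: the forward inclusion uses Assumption \ref{as:1} exactly as you describe, with the paper making the simplest admissible choices $a=f$, $y_0=0$, $k=1$, so that $\phi^{(1)}_{f,\nu_f}\equiv V^{(1)}_{f,\nu_f}=V^{(2)}_{f,\nu_f}=0$ and your containment $(y-y_0)b+K^{\epsilon}_b(\eta)\subseteq\breve K^{r}_{\nu}(\eta)$ reduces to the norm bound $\overline B(\epsilon)+yb\subset\overline B(3\epsilon)$ for $y\in[0,2\epsilon]$ (this is precisely where $\epsilon=r/3$ is spent, confirming your accounting). The one place you genuinely diverge is the reverse inclusion: the paper negates the exterior cone condition at $f^\circ\in\Int D$ to produce some $\hat f=f^\circ+\hat\beta$ with $\hat\beta\in\breve K^{\epsilon/2}_b(\eta)$, and then needs a slightly awkward ``WLOG $f^\circ-a-y^\circ b\in K^{\epsilon/2}_b(\eta)$, else reposition $a$'' step to keep radii under control; your perturbation $g=f+\rho b$ gives the exact identity $g-a-(y^*+\rho)b=f-a-y^*b$ and so bypasses all radius bookkeeping, which is cleaner (and your separate treatment of the binding case $y^*=k\epsilon$ is in fact unnecessary, since $y^*+\rho\le(k+1)\epsilon$ holds automatically for $\rho\le\epsilon$). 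Both arguments quietly use that the supremum defining $\phi^{(k)}_{a,b}(f)$ is attained, which holds because $\mathfrak S^{(k)}_{a,b}(f)$ is compact; your only slightly off remark is attributing the exclusion of the degenerate zero-shift case to the closed-ball device of Remark \ref{rem:1}, whereas it is really the pointedness of $K_b(\eta)$ that forces $\phi^{(k)}_{a,b}(f)=y_0$.
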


\begin{proof}
\textbf{Step 1.} First, let $f^{\circ}\in\pd D$. By Assumption \ref{as:1}, there exists a (not necessarily unique) unit vector $\nu_{f^{\circ}}$ depending on $f^{\circ}$ such that $(f^{\circ}+\breve{K}^{3\epsilon}_{\nu_{f^{\circ}}}(\eta))\cap D=\emptyset$. Let $a=f^{\circ}$ and $b=\nu_{f^{\circ}}$, and take $k=1$. Noting that the spherical cone $\breve{K}^{3\epsilon}_{\nu_{f^{\circ}}}(\eta)$ is pointed, then by (\ref{3.2}) we have
\begin{equation*}
  \mathfrak{S}^{(1)}_{f^{\circ},\nu_{f^{\circ}}}(f^{\circ})=\{y\in[0,\epsilon]:-y\nu_{f^{\circ}}\in  K^\epsilon_{\nu_{f^{\circ}}}(\eta)\}=\{0\},
\end{equation*}
which implies that $\phi^{(1)}_{f^{\circ},\nu_{f^{\circ}}}(f^{\circ})=\sup_{y\in\mathfrak{S}^{(1)}_{f^{\circ},\nu_{f^{\circ}}}(f^{\circ})}y=0$. Thus, by (\ref{3.4}),
\begin{equation}\label{3.6}
  V^{(1)}_{f^{\circ},\nu_{f^{\circ}}}=\sup_{f\in D}\phi^{(1)}_{f^{\circ},\nu_{f^{\circ}}}(f)\geq\phi^{(1)}_{f^{\circ},\nu_{f^{\circ}}}(f^{\circ})=0.
\end{equation}
Also, since $f^{\circ}\in\pd D$, by condition (\ref{3.1}) again we have that $f-f^{\circ}\notin\breve{K}^{3\epsilon}_{\nu_{f^{\circ}}}(\eta)$ for any $f\in D$, while by (\ref{3.3}), $f-f^{\circ}\in y\nu_{f^{\circ}}+K^{\epsilon}_{\nu_{f^{\circ}}}(\eta)$ for any $f\in D$ and $y\in\mathfrak{S}^{(1)}_{f^{\circ},\nu_{f^{\circ}}}(f)$, but since $\nu_{f^{\circ}}=b$ is the orient of $K_{\nu_{f^{\circ}}}(\eta)$, and $\overline{B}(\epsilon)+y\nu_{f^{\circ}}\subset\overline{B}(3\epsilon)$ for any $y\in[0,\epsilon]$, it implies that $y=0$ whenever it exists, and so $\mathfrak{S}^{(1)}_{f^{\circ},\nu_{f^{\circ}}}(f)\subset\{0\}$ for all $f\in D$, and it follows from (\ref{3.6}) that
\begin{equation*}
  V^{(1)}_{f^{\circ},\nu_{f^{\circ}}}=0=\phi^{(1)}_{f^{\circ},\nu_{f^{\circ}}}(f),\quad\forall f\in D.
\end{equation*}

To verify the constraint in (\ref{3.5}), take $k=2$, and arguing similar as above we have
\begin{equation*}
  \mathfrak{S}^{(2)}_{f^{\circ},\nu_{f^{\circ}}}(f^{\circ})=\{y\in[0,2\epsilon]:-y\nu_{f^{\circ}}\in K^\epsilon_{\nu_{f^{\circ}}}(\eta)\}=\{0\},
\end{equation*}
and since $f-f^{\circ}\notin\breve{K}^{3\epsilon}_{\nu_{f^{\circ}}}(\eta)$ and $\overline{B}(\epsilon)+yb\subset\overline{B}(3\epsilon)$ for any $y\in[0,2\epsilon]$, we again conclude that $V^{(2)}_{f^{\circ},\nu_{f^{\circ}}}=0$. From the definitional condition (\ref{3.5}), it follows that $\pd D\subseteq\pd_{\rm s}D$.

\textbf{Step 2.} Conversely, let $f^{\circ}\in\pd_{\rm s}D$. Noting that $\mathfrak{S}^{(k)}_{a,b}(f^{\circ})\subset\mathbb{R}_{+}$ is compact by construction, by condition (\ref{3.5}) there exist $a\in\mathbb{R}^{m}$, $b\in\pd\overline{B}(1)$, and $k\in\mathbb{Z}_{++}$ such that $V^{(k)}_{a,b}=V^{(k+1)}_{a,b}$ and $\phi^{(k)}_{a,b}(f^{\circ})=V^{(k)}_{a,b}:=y^{\circ}\in[0,k\epsilon]$. We want to show that $f^{\circ}\in\pd D$.

Observe that if condition (\ref{3.1}) for $\pd D$ holds for a given ball radius $r=3\epsilon>0$, then it is evident to hold for any smaller radius within $(0,3\epsilon)$.\footnote{Recall that for any boundary point $f\in\pd D$, $(f+\overline{B}(\varepsilon))\setminus D\neq\emptyset$, $\forall\varepsilon>0$; see also 
\cite[\S2]{BBK13}.} In what follows we specify such a (smaller) radius to $\epsilon/2$. By way of contradiction, suppose that $f^{\circ}\notin\pd D$, meaning that for any $\nu\in\pd\overline{B}(1)$, there exists $\hat{f}(\nu)\in D$ such that $\hat{f}(\nu)\in f^{\circ}+\breve{K}^{\epsilon/2}_{\nu}(\eta)$, or equivalently, that $\hat{f}(\nu)=f^{\circ}+\hat{\beta}$ for some $\hat{\beta}\in\breve{K}^{\epsilon/2}_{\nu}(\eta)$. Fix such a vector $\nu=b$.

Then, based on $\mathfrak{S}^{(k)}_{a,b}(f^{\circ})$ from (\ref{3.3}), we can define $\beta^{\circ}:=f^{\circ}-a-y^{\circ}b\in K^\epsilon_{b}(\eta)$. In fact, we can assume without loss of generality that $f^{\circ}-a-y^{\circ}b\in K^{\epsilon/2}_{b}(\eta)$, for otherwise we can work with a new parameter $a'=a+\tilde{a}$ for some $\tilde{a}\in K^\epsilon_{b}(\eta)$ and repeat the same argument. In any case, we have that $\hat{f}(b)-a-y^{\circ}b=\hat{\beta}+\beta^{\circ}\in\breve{K}^\epsilon_{b}(\eta)$.

Now, since $\Int K_{b}(\eta)\neq\emptyset$, there exists $\varepsilon\in(0,\epsilon)$ such that $\hat{\beta}+\beta^{\circ}-\varepsilon b\in\breve{K}^\epsilon_{b}(\eta)$. Hence, by assigning $\hat{y}:=y^{\circ}+\varepsilon$, we have that $\hat{f}(b)-a-\hat{y}b\in \breve{K}^\epsilon_{b}(\eta)$. According to the definitional condition (\ref{3.5}), if $\hat{y}\leq k\epsilon$, it follows that $\hat{y}\in\mathfrak{S}^{(k)}_{a,b}(\hat{f})$ but $\hat{y}>y^{\circ}$, contradicting the fact that $V^{(k)}_{a,b}=y^{\circ}$. On the other hand, suppose $\hat{y}>k\epsilon$, or more precisely, $\hat{y}\in(k\epsilon,(k+1)\epsilon)$ because $\varepsilon<\epsilon$; then, since $\hat{f}(b)\in D$, we have that $V^{(k+1)}_{a,b}\geq\hat{y}>y^{\circ}=V^{(k)}_{a,b}$, hence $V^{(k+1)}_{a,b}\neq V^{(k)}_{a,b}$, violating the constraint in (\ref{3.5}) -- again a contradiction. Thus, $f^\circ \in\pd D$, or $\pd_{\rm s} D\subseteq\pd D$, whence
$\pd D=\pd_{\rm s}D$, proving the theorem.
\end{proof}


The above nonlinear scalarization method can be simplified drastically if the set $D\in\mathrm{Cl}(\mathbb{R}^{m})$ is convex. As aforementioned, in this case condition (\ref{3.1}) holds with arbitrary $r>0$ and $\eta\in(0,1]$. Letting $r\to\infty$ and $\eta=1$, the (partially open) spherical cone in (\ref{3.1}) will be replaced by the open half-space $\Int K_{\nu}(1)$. By taking $\epsilon\to\infty$ accordingly, (\ref{3.3}) reduces to a single scalarization set $\mathfrak{S}^{(1)}_{a,b}(f)=\{y\geq0:f-a-yb\in K_{b}(1)\}$, independent of $k\in\mathbb{Z}_{++}$. To wit, there is only one scalarization functional (and scalarized problem) to consider in (\ref{3.2}) (and (\ref{3.4})), and the implied boundary simplifies into
\begin{equation}\label{3.7}
  \pd_{\rm s}D:=\big\{f\in D:\; \phi^{(1)}_{a,b}(f)=V^{(1)}_{a,b},\;\exists a\in\mathbb{R}^{m},\;b\in\pd\overline{B}(1)\big\}.
\end{equation}
For any given $f\in\pd D$, if we take the orient $b$ as an outward-pointing normal vector at $f$, then $\pd K_{b}(1)=\{\beta\in\mathbb{R}^{m}:\;\langle\beta,b\rangle=0\}$ is a supporting hyperplane containing $f$. Thus, in the case of convex domains, (\ref{3.7}) recovers what the \textsl{supporting hyperplane theorem} implies.

We should also note that it is possible for a nonconvex domain $D$ (such as a bean shape) to satisfy condition (\ref{3.1}) with $r=\infty$ (although $\eta=1$ will be infeasible) -- with no truncation by the closed ball needed. In such a case, a single scalarization function in (\ref{3.2}) (with $k=1$) would suffice.

In general, Theorem \ref{thm:1} gives a \textsl{dual representation} of any (finite-dimensional) nonconvex boundary, which only needs to satisfy the exterior cone condition (\ref{3.1}). As discussed earlier, such a representation can be considered an extension of existing duality results in nonconvex optimization to multi-valued objective functions -- by establishing a strong duality relation between subsets of $\pd D$ and the scalarized problems in (\ref{3.4}); see again Figure \ref{fig:3} and compare, e.g., the duality theorems in
\cite[\text{Thm.} 11.59]{RW09} and 
\cite[\text{Thm.} 9]{AG02}.


\medskip

\section{Further analysis of boundary characterization}\label{sec:4}

In this section we further analyze  the proposed boundary characterization, with the aim of developing techniques to facilitate numerical implementation. We shall address two key aspects, one concerned with the possibility of restricting the definitional parameter space in (\ref{3.2}) and the other serving to reformulate the scalarized problems in (\ref{3.4}) into a standard optimization framework.

\smallskip

\subsection{Dimensionality reduction}\label{sec:4.1}

In many applications, especially when the domain $D$ is bounded (or compact), it is possible to work with (\ref{3.2}) in reduced dimensions. Such \textsl{dimensionality reduction} have important and highly nontrivial implications in numerical implementation. The key idea is to leverage information about the boundary values of $D$ (finite due to boundedness) along each dimension to determine the necessary parametric range ensuring that the proof of Theorem \ref{thm:1} remains valid. More precisely, given each orient parameter $b\in \pd\overline{B}(1)$ in (\ref{3.5}), one can choose the base parameter $a$ appropriately in a (strict) subset of $\mathbb{R}^{m}$, instead of exhausting the whole space. The detailed procedure is presented in Theorem \ref{thm:2} below.

\begin{theorem}\label{thm:2}
Let $m\geq2$ and assume that $D\in\mathrm{Cl}(\mathbb{R}^{m})$ is bounded. For any given $\iota\in\mathbb{Z}\cap[1,m]$, define the reduced boundary
\begin{equation}\label{3.8}
  \breve{\pd}_{\rm s}D(\iota):=\big\{f\in D:\; \phi^{(k)}_{a,b}(f)=V^{(k)}_{a,b},\;\exists a\in P_{b}(\iota),\;b\in\pd\overline{B}(1),\;k\in\mathbb{Z}_{++}\text{ s.t. }V^{(k+1)}_{a,b}=V^{(k)}_{a,b}\big\},
\end{equation}
where $V^{(k)}_{a,b}$ is given by (\ref{3.4}), and
\begin{align}\label{3.9}
  P_{b}(\iota)&=\Bigg\{a\in\mathbb{R}^{m}:\;
  a_{\iota}=
  \begin{cases}
    \underline{f}_{\iota},&\quad\text{if }b_{\iota}\geq0 \\
    \overline{f}_{\iota},&\quad\text{if }b_{\iota}<0,
  \end{cases}
  ; \nonumber\\
  &\quad\;a_{i}\in \big[\underline{f}_{i}-\max\{|\underline{f}_{\iota}|b_{i},|\overline{f}_{\iota}|b_{i}\},\overline{f}_{i} -\min\{|\underline{f}_{\iota}|b_{i},|\overline{f}_{\iota}|b_{i}\}\big],\;\forall i\neq\iota\Bigg\},
\end{align}
where $\underline{f}_{i}=\inf D_{i}\in\mathbb{R}$ and $\overline{f}_{i}=\sup D_{i}\in\mathbb{R}$, with $D_{i}=\{f_{i}:\;f\in D\}$, $i\in\mathbb{Z}\cap[1,m]$. Then, it holds that $\pd D=\breve{\pd}_{\rm s}D(\iota)$.
\end{theorem}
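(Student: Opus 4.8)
As with Theorem \ref{thm:1}, I would split the claim $\pd D=\breve{\pd}_{\rm s}D(\iota)$ into two inclusions. One is free: since $P_{b}(\iota)\subseteq\mathbb{R}^{m}$ for every orient $b$, the defining condition of $\breve{\pd}_{\rm s}D(\iota)$ in (\ref{3.8}) is simply that of $\pd_{\rm s}D$ in (\ref{3.4}) with the range of the base parameter $a$ restricted, so $\breve{\pd}_{\rm s}D(\iota)\subseteq\pd_{\rm s}D=\pd D$ by Theorem \ref{thm:1}. The entire content of the theorem is therefore the reverse inclusion $\pd D\subseteq\breve{\pd}_{\rm s}D(\iota)$, i.e.\ that each $f^{\circ}\in\pd D$ can be witnessed by a base point lying in the smaller set $P_{b}(\iota)$ rather than in all of $\mathbb{R}^{m}$.

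For this I would re-run step 1 of the proof of Theorem \ref{thm:1}, but slide the base point $a$ off $f^{\circ}$ and onto the prescribed coordinate face. Given $f^{\circ}\in\pd D$, take an admissible orient $b=\nu_{f^{\circ}}$ from Assumption \ref{as:1} with $b_{\iota}\neq0$ (possible, as discussed below), and set $a:=f^{\circ}-t^{\circ}b$, where $t^{\circ}\geq0$ is the unique value making $a_{\iota}$ equal the prescribed extreme ($\underline{f}_{\iota}$ if $b_{\iota}\geq0$, $\overline{f}_{\iota}$ if $b_{\iota}<0$); then take $k:=\max\{1,\lceil t^{\circ}/\epsilon\rceil\}$, so that $t^{\circ}\leq k\epsilon\leq t^{\circ}+\epsilon$. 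Three things then need to be checked. (i) \emph{$a\in P_{b}(\iota)$}: the transversal coordinates are $a_{i}=f^{\circ}_{i}-t^{\circ}b_{i}$ for $i\neq\iota$, and one shows they fall in the interval of (\ref{3.9}); here boundedness of $D$ makes $t^{\circ}$ finite, and the $|\underline{f}_{\iota}|,|\overline{f}_{\iota}|$ terms are exactly what is needed to absorb the drift $t^{\circ}b_{i}$. (ii) \emph{$\phi^{(k)}_{a,b}(f^{\circ})=t^{\circ}$}: one has $f^{\circ}-a-yb=(t^{\circ}-y)b$, which lies in $K^{\epsilon}_{b}(\eta)$ precisely for $y\in[t^{\circ}-\epsilon,t^{\circ}]$, so the supremum over $[0,k\epsilon]$ is $t^{\circ}$. (iii) \emph{$V^{(k)}_{a,b}=t^{\circ}=V^{(k+1)}_{a,b}$}: if some $f\in D$ had $f-a-yb\in K^{\epsilon}_{b}(\eta)$ with $t^{\circ}<y\leq(k+1)\epsilon$, then $z:=y-t^{\circ}\in(0,2\epsilon]$ (since $(k+1)\epsilon\leq t^{\circ}+2\epsilon$) and $f-f^{\circ}=zb+\kappa$ with $\kappa\in K^{\epsilon}_{b}(\eta)$, so $f-f^{\circ}\in\Int K_{b}(\eta)$ and $\|f-f^{\circ}\|\leq z+\epsilon\leq3\epsilon$; hence $f-f^{\circ}\in\breve{K}^{3\epsilon}_{b}(\eta)$, contradicting (\ref{3.1}) because $3\epsilon=r$. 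Thus $\phi^{(k)}_{a,b}(\cdot)\leq t^{\circ}$ on $D$ with equality at $f^{\circ}$, and the same with $k$ replaced by $k+1$; so $f^{\circ}\in\breve{\pd}_{\rm s}D(\iota)$.

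The step I expect to be the real obstacle is the combination of (i) with the parenthetical ``$b_{\iota}\neq0$''. To secure $b_{\iota}\neq0$ one first decreases $\eta$ (this only strengthens Assumption \ref{as:1} and leaves $\epsilon=r/3$ intact), which opens a fixed angular neighbourhood of admissible orients around each $\nu_{f^{\circ}}$ whenever the exterior of $D$ near $f^{\circ}$ carries a nondegenerate cone; since $m\geq2$, that neighbourhood contains directions with $b_{\iota}\neq0$ (and, at ``side'' boundary points where the exterior is a half-space, directions with $|b_{\iota}|$ bounded below by a constant depending only on $\eta$). But (i) demands more: the slid base $a=f^{\circ}-t^{\circ}b$ must actually land inside the box of (\ref{3.9}), and since $t^{\circ}b_{i}=(f^{\circ}_{\iota}-\underline{f}_{\iota})b_{i}/b_{\iota}$ (for $b_{\iota}>0$), this couples the admissible range of $b_{\iota}$ to the extent $\overline{f}_{\iota}-\underline{f}_{\iota}$ and to the magnitudes $|\underline{f}_{\iota}|,|\overline{f}_{\iota}|$. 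Showing that the admissible orients can be chosen so that both requirements hold simultaneously -- i.e., that the quantitative form of (\ref{3.9}) is consistent with the geometry of the exterior cone condition -- is, I believe, the technical heart of the proof; the remaining bookkeeping in (ii)--(iii) mirrors Theorem \ref{thm:1} almost verbatim, with the constant $\epsilon=r/3$ again furnishing exactly the slack that closes the estimates.
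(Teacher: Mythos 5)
Your easy inclusion and your steps (ii)--(iii) are correct --- (ii)--(iii) are in fact a cleaner version of the bookkeeping in step 1 of Theorem \ref{thm:1} --- but the gap you flag in (i) is genuine, and it is exactly the point at which your construction diverges from the paper's and cannot be repaired as written. With $a=f^{\circ}-t^{\circ}b$ and $a_{\iota}$ pinned to $\underline{f}_{\iota}$ (say $b_{\iota}>0$), the transversal displacement is $t^{\circ}|b_{i}|=(f^{\circ}_{\iota}-\underline{f}_{\iota})|b_{i}|/b_{\iota}$, whereas the box $P_{b}(\iota)$ in (\ref{3.9}) only tolerates a displacement of order $\max\{|\underline{f}_{\iota}|,|\overline{f}_{\iota}|\}|b_{i}|$ beyond $[\underline{f}_{i},\overline{f}_{i}]$; so you implicitly need $b_{\iota}$ bounded below in terms of $(f^{\circ}_{\iota}-\underline{f}_{\iota})/\max\{|\underline{f}_{\iota}|,|\overline{f}_{\iota}|\}$, and Assumption \ref{as:1} provides no such bound. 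Concretely, let $D=[0,10]^{2}$ minus the open slit $(4.9,5.1)\times(1,11)$, take $\iota=1$ and $f^{\circ}=(5,1)$ at the bottom of the slit, with Assumption \ref{as:1} holding for $r=1$ and small $\eta$. Every admissible orient must keep the segment $\{f^{\circ}+t\nu:\,0<t\leq r\}$ inside the width-$0.2$ channel, forcing $|b_{1}|<0.1$; hence $t^{\circ}>50$, $b_{2}>0.99$, and $a_{2}=1-t^{\circ}b_{2}<-48$, far below the admissible lower end $-10b_{2}\geq-10$. Shrinking $\eta$ only narrows the cone and never enlarges the set of axes that fit the channel for the \emph{given} $r$, so your proposed remedy does not close (i); note also that changing $\eta$ (or $r$) alters $K^{\epsilon}_{b}(\eta)$ in (\ref{3.3}) and hence the very objects $\phi^{(k)}_{a,b}$, $V^{(k)}_{a,b}$, $\breve{\pd}_{\rm s}D(\iota)$ the theorem is about, so it is not a free move in any case.

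The paper escapes this precisely by \emph{not} sliding $a$ along the ray through $b$: in (\ref{3.10}) only the $\iota$-th coordinate of $a$ is pushed to the extreme value, while each transversal coordinate is shifted merely by $|f^{\circ}_{\iota}|b_{i}$, which lands in the box of (\ref{3.9}) automatically because $|f^{\circ}_{\iota}|\leq\max\{|\underline{f}_{\iota}|,|\overline{f}_{\iota}|\}$ --- this is why the interval in (\ref{3.9}) is written with $|\underline{f}_{\iota}|b_{i}$ and $|\overline{f}_{\iota}|b_{i}$ rather than with the extent $\overline{f}_{\iota}-\underline{f}_{\iota}$. The price is that $f^{\circ}-a$ is no longer parallel to $b$, so your clean identity $f^{\circ}-a-yb=(t^{\circ}-y)b$ is replaced by the componentwise analysis in (\ref{3.11}) and the quantity $\bar{y}_{\iota}$. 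To keep your parallel-shift idea you would have to weaken $P_{b}(\iota)$ (e.g.\ allow $a_{\iota}$ strictly beyond $\underline{f}_{\iota}$, in the spirit of Remark \ref{rem:4}, with a $b$-dependent margin), which changes the statement being proved; as it stands, your step (i) is an unfilled and unfillable gap for the theorem as stated.
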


\begin{proof}
Let $\iota\in\mathbb{Z}\cap[1,m]$ be fixed. First, since $P_{b}(\iota)\subset\mathbb{R}^{m}$, by Theorem \ref{thm:1} we have that $\pd D\supset\breve{\pd}_{\rm s}D(\iota)$.

To show the other direction, let $f^{\circ}\in\pd D$, and clearly $f_{i}\in[\underline{f}_i,\overline{f}_{i}]$, $i\neq\iota$. Again, Assumption \ref{as:1} implies the existence of a unit vector $\nu_{f^{\circ}}$ such that $(f^{\circ}+\breve{K}^{3\epsilon}_{\nu_{f^{\circ}}}(\eta))\cap D=\emptyset$. Let us take $b=\nu_{f^{\circ}}$ and engineer $a$ such that
\begin{equation}\label{3.10}
  a_{i}=
  \begin{cases}
    f^{\circ}_{i}-|f^{\circ}_{\iota}|b_{i},&\quad\text{if }i\neq\iota\\
    \begin{cases}
    \underline{f}_{\iota},&\quad\text{if }b_{\iota}\geq0 \\
    \overline{f}_{\iota},&\quad\text{if }b_{\iota}<0,
  \end{cases},&\quad\text{if }i=\iota,
  \end{cases}
  \quad i\in\mathbb{Z}\cap[1,m],
\end{equation}
satisfying the first condition in $P_{b}(\iota)$.

Suppose $b_{\iota}\neq0$. Similarly as in the proof of Theorem \ref{thm:1}, since $f^{\circ}\in\pd D$, condition (\ref{3.1}) implies that $f-f^{\circ}\notin\breve{K}^{3\epsilon}_{b}(\eta)$, $\forall f\in D$; on the other hand, by (\ref{3.2}) we know that for any $f\in D$, $k\in\mathbb{Z}_{++}$, and $y\in\mathfrak{S}^{(k)}_{a,b}(f)$, $f-a-yb\in K^\epsilon_{b}(\eta)$, for which by (\ref{3.10}),
\begin{equation}\label{3.11}
  f_{i}-a_{i}-yb_{i}=
  \begin{cases}
    \displaystyle f_{i}-f^{\circ}_{i}+(|f^{\circ}_{\iota}|-y)b_{i},&\quad\text{if }i\neq\iota \\
    \displaystyle f_{\iota}-f^{\circ}_{\iota}+\bigg(\frac{f^{\circ}_{\iota}-a_{\iota}}{b_{\iota}}-y\bigg)b_{\iota},&\quad\text{if }i=\iota,
  \end{cases}
  \quad i\in\mathbb{Z}\cap[1,m].
\end{equation}
Since $\overline{B}(\epsilon)+yb\subset\overline{B}(3\epsilon)$ for any $y\in[0,2\epsilon]$, it is implied that
\begin{equation*}
  y\leq\min\bigg\{|f^{\circ}_{\iota}|,\frac{f^{\circ}_{\iota}-a_{\iota}}{b_{\iota}}\bigg\}:=\bar{y}_{\iota}.
\end{equation*}
Clearly, under (\ref{3.10}), $\bar{y}_{\iota}\geq0$, and we can choose $k$ so large that $\bar{y}_{\iota}\in[(k-1)\epsilon,k\epsilon]$, which, along with the fact that $\mathfrak{S}^{(k)}_{a,b}(f^{\circ})\subset[0,\bar{y}_{\iota}]$ (due to (\ref{3.11})), yields
\begin{equation*}
  V^{(k)}_{a,b}=\phi^{(k)}_{a,b}(f^{\circ})=\bar{y}_{\iota},
\end{equation*}
but as $0\leq\bar{y}_{\iota}\leq k\epsilon$, we have that $V^{(k+1)}_{a,b}=f^{\circ}_{\iota}$ as well, which verifies the constraint in (\ref{3.5}).

Suppose $b_{\iota}=0$, and then the second piece on the right side of (\ref{3.11}) is understood to be equal to $f_{\iota}-a_{\iota}$, independent of $y$. Thus, we can simply set $a_{\iota}=\underline{f}_{\iota}$ (as in (\ref{3.9})), and it suffices to consider the first piece on the right side of (\ref{3.11}), along with $\bar{y}_{\iota}=|f^{\circ}_{\iota}|$, and the same conclusion is reached.

To complete the proof, observe that the domains of $a_{i}$, for $i\neq\iota$, are precisely as stated in the theorem, where the maximum and minimum depend on the sign of $b_{i}$ for $i\neq\iota$.
\end{proof}

\begin{remark}\label{rem:3}
The piecewise structure for $a$ in (\ref{3.10}) is needed due to $b$ being the orient of the cone $K_{b}(\eta)$, and there is no guarantee of the components of $b$ being nonzero. This feature is notably different from the case of (standard) multi-criteria optimization problems with respect to a fixed cone and a variable orient parameter (compare 
\cite[\text{Thm.} 3.5]{E09}).
\qed
\end{remark}

\begin{remark}\label{rem:4}
In constructing the parametric range $P_{b}(\iota)$ in (\ref{3.9}), the requirement on $a_{\iota}$ is for simplicity, and any value satisfying that $a_{\iota}\leq \underline{f}_{\iota}$ if $b_{\iota}\geq0$ and $a_{\iota}\geq \overline{f}_{\iota}$ if $b_{\iota}<0$ is workable for the same purpose; also, the intervals governing each $a_{i}$, $i\neq\iota$, are not tight in that strictly narrower intervals are possible, but such intervals are arguably the easiest to construct as they are ``dimensionally marginal,'' with $\underline{f}_{i}$ and $\overline{f}_{i}$ focusing on the $i$th dimension, rather than involving other dimensions depending on $b$.
\qed
\end{remark}

Under suitable conditions, Theorem \ref{thm:2} can be applied indefinitely for continued dimensionality reduction, which we state in the next corollary.

\begin{corollary}\label{cor:1}
In the setting of Theorem \ref{thm:2}, assume further that there exists a subset $I\subset\mathbb{Z}\cap[1,m]$ with $\mathrm{card}I\geq2$ such that for every $b\in\overline{B}(1)$ and any $\iota,\iota'\in I$ with $\iota\neq\iota'$,
\begin{align}\label{3.12}
  &\underline{f}_{\iota},\overline{f}_{\iota} \in\big[\underline{f}_{\iota'}-\max\{|\underline{f}_{\iota}|b_{\iota'},|\overline{f}_{\iota}|b_{\iota'}\},\overline{f}_{\iota'} -\min\{|\underline{f}_{\iota}|b_{\iota'},|\overline{f}_{\iota}|b_{\iota'}\}\big], \nonumber\\
  &\bigcap_{\iota\in I}\big[\underline{f}_{i}-\max\{|\underline{f}_{\iota}|b_{i},|\overline{f}_{\iota}|b_{i}\},\overline{f}_{i} -\min\{|\underline{f}_{\iota}|b_{i},|\overline{f}_{\iota}|b_{i}\}\big]\neq\emptyset,\quad\forall i\notin I.
\end{align}
Then, it holds that $\pd D=\breve{\pd}_{\rm s}D(I)$, where
\begin{equation}\label{3.13}
  \breve{\pd}_{\rm s}D(I)=\bigg\{f\in D:\;\phi^{(k)}_{a,b}(f)=V^{(k)}_{a,b},\;\exists a\in\bigcap_{\iota\in I}P_{b}(\iota),\;b\in\pd\overline{B}(1),\;k\in\mathbb{Z}_{++}\text{ s.t. }V^{(k+1)}_{a,b}=V^{(k)}_{a,b}\bigg\},
\end{equation}
and $P_{b}(\iota)$ is given by (\ref{3.9}).
\end{corollary}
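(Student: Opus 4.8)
\textbf{Proof proposal for Corollary \ref{cor:1}.}

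The plan is to piggyback on Theorem \ref{thm:2}, iterating the reduction index by index over the set $I$, and to use the hypothesis (\ref{3.12}) precisely to ensure that the successive parametric restrictions are mutually compatible --- i.e., that the intersection $\bigcap_{\iota\in I}P_{b}(\iota)$ is nonempty and still contains, for each boundary point $f^{\circ}\in\pd D$ and each admissible orient $b=\nu_{f^{\circ}}$, a base $a$ of the form engineered in (\ref{3.10}) that makes the argument of Theorem \ref{thm:2} go through simultaneously in all the coordinates indexed by $I$. One direction is immediate: since $\bigcap_{\iota\in I}P_{b}(\iota)\subset\mathbb{R}^{m}$, Theorem \ref{thm:1} gives $\breve{\pd}_{\rm s}D(I)\subseteq\pd D$ at once, exactly as in the opening line of the proof of Theorem \ref{thm:2}.

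For the reverse inclusion $\pd D\subseteq\breve{\pd}_{\rm s}D(I)$, fix $f^{\circ}\in\pd D$ and, by Assumption \ref{as:1}, a unit vector $b=\nu_{f^{\circ}}$ with $(f^{\circ}+\breve{K}^{3\epsilon}_{b}(\eta))\cap D=\emptyset$. The key observation is that the base $a$ constructed in (\ref{3.10}) in the proof of Theorem \ref{thm:2} has a coordinate description that is \emph{per-coordinate}: the $\iota$-th coordinate is pinned to $\underline{f}_{\iota}$ or $\overline{f}_{\iota}$ (depending on $\mathrm{sgn}\,b_{\iota}$) and every other coordinate $a_{i}$ takes the single value $f^{\circ}_{i}-|f^{\circ}_{\iota}|b_{i}$, which the proof of Theorem \ref{thm:2} already verified lies in the $i$-th interval appearing in (\ref{3.9}). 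So I would propose to set, for $i\notin I$,
\begin{equation*}
  a_{i}=f^{\circ}_{i}-\max_{\iota\in I}\{|f^{\circ}_{\iota}|b_{i}\}\quad\text{(or any common value)},
\end{equation*}
and argue that this lies in every interval $\big[\underline{f}_{i}-\max\{|\underline{f}_{\iota}|b_{i},|\overline{f}_{\iota}|b_{i}\},\overline{f}_{i}-\min\{|\underline{f}_{\iota}|b_{i},|\overline{f}_{\iota}|b_{i}\}\big]$, $\iota\in I$ --- this is exactly where the second line of hypothesis (\ref{3.12}) is used, guaranteeing the intersection is nonempty so that a common $a_{i}$ exists. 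For the coordinates $\iota\in I$ themselves, one sets $a_{\iota}=\underline{f}_{\iota}$ if $b_{\iota}\ge0$ and $a_{\iota}=\overline{f}_{\iota}$ if $b_{\iota}<0$; the first line of (\ref{3.12}) is what ensures that this choice of $a_{\iota}$ is simultaneously a legal \emph{non-$\iota'$ coordinate} for the constraint $P_{b}(\iota')$ of every other $\iota'\in I$, so that $a\in P_{b}(\iota)$ for all $\iota\in I$ at once, i.e. $a\in\bigcap_{\iota\in I}P_{b}(\iota)$.

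Having secured such an $a$, the rest is a reprise of Theorem \ref{thm:2}: condition (\ref{3.1}) gives $f-f^{\circ}\notin\breve{K}^{3\epsilon}_{b}(\eta)$ for all $f\in D$, while $f-a-yb\in K^{\epsilon}_{b}(\eta)$ with $\overline{B}(\epsilon)+yb\subset\overline{B}(3\epsilon)$ for $y\in[0,2\epsilon]$ forces $y\le\bar{y}$ for a finite bound $\bar{y}=\bar{y}(f^{\circ},b,a)\ge0$ built exactly as the $\bar y_\iota$ in Theorem \ref{thm:2} (now the minimum is over all $\iota\in I$ of the corresponding $\mathrm{min}\{|f^{\circ}_{\iota}|,(f^{\circ}_{\iota}-a_{\iota})/b_{\iota}\}$ terms, with the $b_{\iota}=0$ coordinates handled as before); choosing $k$ with $\bar{y}\in[(k-1)\epsilon,k\epsilon]$ yields $V^{(k)}_{a,b}=\phi^{(k)}_{a,b}(f^{\circ})=\bar{y}$ and $V^{(k+1)}_{a,b}=\bar{y}=V^{(k)}_{a,b}$, so $f^{\circ}\in\breve{\pd}_{\rm s}D(I)$. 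The main obstacle I anticipate is purely bookkeeping rather than conceptual: one must check carefully that a single base $a$ --- in particular a single value for each shared off-index coordinate $a_{i}$, $i\notin I$, and a single pinned value for each $a_{\iota}$, $\iota\in I$ --- can meet \emph{all} the membership requirements of $\bigcap_{\iota\in I}P_{b}(\iota)$ while still being of the engineered form that drives the bound $\bar y\ge0$; this is the content of hypothesis (\ref{3.12}), and the delicate point is matching the sign-case definitions of the pinned coordinates across different $\iota,\iota'\in I$ uniformly in $b\in\overline{B}(1)$. Once that compatibility is in hand, the argument closes exactly as in Theorem \ref{thm:2}. \qed
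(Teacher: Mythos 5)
Your proposal follows essentially the same route as the paper: apply Theorem \ref{thm:2} coordinate by coordinate over $I$, use hypothesis (\ref{3.12}) to guarantee that the parametric ranges $P_{b}(\iota)$ have a nonempty common intersection of the required form, and conclude that a single base $a$ in that intersection witnesses membership for every $f^{\circ}\in\pd D$. In fact your write-up is more explicit than the paper's own three-line argument (which simply asserts $\pd D=\bigcap_{\iota\in I}\breve{\pd}_{\rm s}D(\iota)=\breve{\pd}_{\rm s}D(I)$ once the intersection is shown nonempty), and the ``bookkeeping'' point you flag --- that one must check a \emph{single} common $a$, no longer of the exact engineered form (\ref{3.10}), still drives the bound $\bar{y}\ge 0$ and the equality $V^{(k)}_{a,b}=\phi^{(k)}_{a,b}(f^{\circ})$ --- is precisely the step the paper leaves implicit as well.
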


\begin{proof}
By Theorem \ref{thm:2}, for any subset $I\subset\mathbb{Z}\cap[1,m]$ it holds that $\pd D=\breve{\pd}_{\rm s}D(\iota)$ for every $\iota\in I$, where $\breve{\pd}_{\rm s}D(\iota)$ is given by (\ref{3.8}). Note that under the conditions in (\ref{3.12}), we can write
\begin{align*}
  \bigcap_{\iota\in I}P_{b}(\iota)&=\Bigg\{a\in\mathbb{R}^{m}:\;
  a_{\iota}=
  \begin{cases}
    \underline{f}_{\iota},&\quad\text{if }b_{\iota}\geq0 \\
    \overline{f}_{\iota},&\quad\text{if }b_{\iota}<0,
  \end{cases}
  ,\;\forall\iota\in I; \\
  &\qquad a_{i}\in\bigcap_{\iota\in I} \big[\underline{f}_{i}-\max\{\underline{f}_{\iota}b_{i},\overline{f}_{\iota}b_{i}\},\overline{f}_{i} -\min\{\underline{f}_{\iota}b_{i},\overline{f}_{\iota}b_{i}\}\big],\;\forall i\notin I\Bigg\}\neq\emptyset.
\end{align*}
Therefore, $\pd D=\bigcap_{\iota\in I}\breve{\pd}_{\rm s}D(\iota)=\breve{\pd}_{\rm s}D(I)$, as in (\ref{3.13}).
\end{proof}

\smallskip

\subsection{An alternative formulation}
\label{sec:4.2}

It is by now clear that calculating the boundary $\pd D$, or equivalently,
the scalarization-implied boundary in (\ref{3.4}), requires solving the scalarized problem in (\ref{3.5}) -- parameterized by $a\in\mathbb{R}^{m}$, $b\in\overline{B}(1)$, and $k\in\mathbb{Z}_{++}$. Given the current form of the scalarization sets in (\ref{3.3}), such a problem (even with dimensionality reduction) is somewhat cumbersome to approach, especially when $f:\mathcal{A}\mapsto\mathbb{R}^{m}$ is considered an objective function from some admissibility set $\mathcal{A}$. The goal of this section is to reformulate the scalarization functions in (\ref{3.2}) in an explicit form so that solution techniques from standard (stochastic) optimization can be applied to facilitate solving (\ref{3.5}).

The following lemma deciphers the structure of the (optimal) scalarization functions.

\begin{lemma}\label{lem:1}
Consider the setting of Theorem \ref{thm:1} and let Assumption \ref{as:1} be in force. For any given $f\in D$, $a\in\mathbb{R}^{m}$, $b\in\pd\overline{B}(1)$, and $k\in\mathbb{Z}_{++}$, either $\phi^{(k)}_{a,b}(f)\in\{0,k\epsilon\}$ or $f-a-b\phi^{(k)}_{a,b}(f)\in\pd K_{b}(\eta)=\{\beta\in\mathbb{R}^{m}:\;(1-\eta)\|\beta\|=\langle\beta,b\rangle\}$, with $\epsilon=r/3$.
\end{lemma}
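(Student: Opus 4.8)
The plan is to fix $f\in D$, $a\in\mathbb{R}^m$, $b\in\pd\overline B(1)$, $k\in\mathbb{Z}_{++}$, write $\phi:=\phi^{(k)}_{a,b}(f)$, and suppose that $\phi\notin\{0,k\epsilon\}$; the goal is then to show $\beta:=f-a-b\phi\in\pd K_b(\eta)$. First I would record the structure of the scalarization set $\mathfrak{S}^{(k)}_{a,b}(f)$: a value $y\in[0,k\epsilon]$ lies in it precisely when $f-a-yb\in K^\epsilon_b(\eta)=\overline B(\epsilon)\cap K_b(\eta)$, i.e. when both $\|f-a-yb\|\le\epsilon$ and $(1-\eta)\|f-a-yb\|\le\langle f-a-yb,\,b\rangle$. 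Since $\mathfrak{S}^{(k)}_{a,b}(f)$ is compact (closed, as the continuous preimage of a closed set intersected with $[0,k\epsilon]$, and bounded), the supremum $\phi$ is attained, so $\beta=f-a-b\phi\in K^\epsilon_b(\eta)$, in particular $\beta\in K_b(\eta)$. What remains is to rule out $\beta\in\Int K_b(\eta)$ under the standing assumption $0<\phi<k\epsilon$.

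The key step is a perturbation argument: suppose for contradiction that $\beta\in\Int K_b(\eta)$, i.e. $(1-\eta)\|\beta\|<\langle\beta,b\rangle$ (strictly). I would increase $y$ slightly to $\phi+\delta$ for small $\delta>0$ and check that $\phi+\delta$ still lies in $\mathfrak{S}^{(k)}_{a,b}(f)$, contradicting maximality of $\phi$. For the cone-membership half: $f-a-(\phi+\delta)b=\beta-\delta b$, and since $\langle b,b\rangle=1$, the strict inequality $(1-\eta)\|\beta\|<\langle\beta,b\rangle$ is an open condition, so by continuity $(1-\eta)\|\beta-\delta b\|<\langle\beta-\delta b,b\rangle=\langle\beta,b\rangle-\delta$ holds for $\delta$ small enough; hence $\beta-\delta b\in K_b(\eta)$. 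For the ball half: here I must use that $b$ is the \emph{orient} (axis) of the cone. Because $\beta\in K_b(\eta)$, one has $\langle\beta,b\rangle\ge(1-\eta)\|\beta\|\ge 0$, so moving in the $-b$ direction does not increase the norm in the relevant sense — more precisely, $\|\beta-\delta b\|^2=\|\beta\|^2-2\delta\langle\beta,b\rangle+\delta^2\le\|\beta\|^2$ for $0\le\delta\le 2\langle\beta,b\rangle$ — and since $\|\beta\|\le\epsilon$, for $\delta$ small we get $\|\beta-\delta b\|\le\epsilon$, so $\beta-\delta b\in\overline B(\epsilon)$. Finally $\phi+\delta\le k\epsilon$ because $\phi<k\epsilon$ strictly and $\delta$ is taken small. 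Thus $\phi+\delta\in\mathfrak{S}^{(k)}_{a,b}(f)$, contradicting $\phi=\sup\mathfrak{S}^{(k)}_{a,b}(f)$. Therefore $\beta\in K_b(\eta)\setminus\Int K_b(\eta)=\pd K_b(\eta)$, which gives the stated equality $(1-\eta)\|\beta\|=\langle\beta,b\rangle$.

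One subtlety I would be careful about is the degenerate case $\beta=\0$: then $(1-\eta)\|\beta\|=0=\langle\beta,b\rangle$ so $\beta\in\pd K_b(\eta)$ trivially, and there is nothing to prove; alternatively $\beta=\0$ forces $f-a=b\phi$, and one can note this is consistent with the boundary description. I would also double-check that the direction $-b$ is legitimate, i.e. that decreasing is not the relevant move: since we are \emph{maximizing} $y$ and $\mathfrak S$ is a set of nonnegative reals, the only escape from "$\phi$ is not on $\pd K_b(\eta)$" is to push $y$ \emph{up}, which is exactly the perturbation above; so the argument is tight.

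The main obstacle I anticipate is the ball constraint, not the cone constraint. The cone-membership part is a soft open-condition/continuity argument, but keeping $\beta-\delta b$ inside $\overline B(\epsilon)$ genuinely relies on $b$ pointing along the axis so that the component of $\beta$ along $b$ is nonnegative; if $b$ were an arbitrary unit vector rather than the orient, moving along $-b$ could increase the norm and the perturbation would fail. This is precisely why the lemma is stated with $b\in\pd\overline B(1)$ serving simultaneously as the shift direction and the cone axis, and it is the place where that coincidence is used. The endpoints $\phi\in\{0,k\epsilon\}$ are excluded from the conclusion exactly because at $\phi=k\epsilon$ the "push $y$ up" move is blocked by the interval constraint (not by geometry), and at $\phi=0$ there may simply be no admissible $y>0$ at all — so no boundary information is extracted there.
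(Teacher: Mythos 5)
Your proof is correct and follows essentially the same perturbation-by-contradiction argument as the paper: assume $\beta=f-a-\phi b\in\Int K_{b}(\eta)$ and push $y$ up to $\phi+\delta$ for small $\delta>0$ to contradict maximality. You are in fact slightly more careful than the paper's own proof, which only verifies the cone condition for $\hat{y}=\phi+\delta$ and leaves the ball constraint $\|\beta-\delta b\|\leq\epsilon$ implicit; your observation that $\langle\beta,b\rangle>0$ whenever $\beta\in\Int K_{b}(\eta)$, so that moving along $-b$ does not increase the norm for small $\delta$, makes that step explicit.
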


\begin{proof}
For convenience, let us write $y^{\ast}=\phi^{(k)}_{a,b}(f)$, with $f,a,b,k$ given. Then, by the definition of the corresponding scalarization set in (\ref{3.3}), we have $y^{\ast}\in[0,k\epsilon]$. Thus, we only need to consider the case $y^{\ast}\in(0,k\epsilon)$.

Suppose that $f-a-y^{\ast}b\in\Int K_{b}(\eta)$. Then, by (\ref{2.1}), $(1-\eta)\|f-a-y^{\ast}b\|<\langle f-a-y^{\ast}b,b\rangle$, or equivalently, for some $\varepsilon_{1}>0$,
\begin{equation*}
  (1-\eta)\|f-a-y^{\ast}b\|+\varepsilon_{1}\leq\langle f-a-y^{\ast}b,b\rangle.
\end{equation*}
On the other hand, since $y^{\ast}<k\epsilon$, there exists $\varepsilon_{2}\in(0,\varepsilon_{1}/(2-\eta))$ such that $\hat{y}=y^{\ast}+\varepsilon_{2}\in(0,k\epsilon]$. Then, the fact that $\|b\|=1$, together with the triangle inequality, yields that
\begin{align}\label{4.1}
  (1-\eta)\|f-a-\hat{y}b\|&\leq(1-\eta)\|f-a-y^{\ast}b\|+\varepsilon_{2}(1-\eta)
  \leq(1-\eta)\|f-a-y^{\ast}b\|+\varepsilon_{1}-\varepsilon_{2} \nonumber\\
  &  \leq\langle f-a-y^{\ast}b,b\rangle-\varepsilon_{2}
  =\langle f-a-\hat{y}b,b\rangle,
\end{align}
that is, $\hat{y}\in\mathfrak{S}^{(k)}_{a,b}(f)$, but $\hat{y}>y^{\ast}$, contradicting the maximality of $y^{\ast}$ due to (\ref{3.2}). Therefore, $f-a-y^{\ast}b\in\pd K_{b}(\eta)$.
\end{proof}

The reformulation is presented in Theorem \ref{thm:3} below.

\begin{theorem}\label{thm:3}
Consider the setting of Theorem \ref{thm:1} and let Assumption \ref{as:1} hold. For any $a\in\mathbb{R}^{m}$, $b\in\pd\overline{B}(1)$, and $k\in\mathbb{Z}_{++}$, the value function in (\ref{3.4}) can be equivalently written as
\begin{equation}\label{4.2}
  V^{(k)}_{a,b}=\sup_{f\in D}\phi^{(k)}_{a,b}(f)=\min\Big\{\Big(\sup_{f\in D}H^{\eta}_{a,b}(f)\Big)^{+},k\epsilon\Big\},
\end{equation}
where $(\cdot)^{+}$ denotes the positive part and
\begin{equation}\label{4.3}
  H^{\eta}_{a,b}(f)=\langle f-a,b\rangle-\sqrt{\frac{(1-\eta)^{2}(\|f-a\|^{2}-\langle f-a,b\rangle^{2})}{\eta(2-\eta)}},
\end{equation}
with $\epsilon=r/3>0$ and $\eta\in(0,1]$.
\end{theorem}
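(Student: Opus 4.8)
The plan is to fix $a,b,k$ and compute $\phi^{(k)}_{a,b}(f)$ pointwise by explicitly solving the one-dimensional constrained problem defining $\mathfrak{S}^{(k)}_{a,b}(f)$, then take the supremum over $f\in D$. Writing $y^{\ast}=\phi^{(k)}_{a,b}(f)$ and recalling from (\ref{3.3}) that $y^{\ast}\in[0,k\epsilon]$, Lemma \ref{lem:1} tells us that in the only nontrivial case the vector $f-a-y^{\ast}b$ lies on $\pd K_b(\eta)$, i.e.\ satisfies $(1-\eta)\|f-a-y^{\ast}b\|=\langle f-a-y^{\ast}b,b\rangle$. First I would square this equality — legitimate because the right-hand side is then nonnegative — to obtain a quadratic equation in the scalar $y^{\ast}$: using $\|b\|=1$, expand $\|f-a-y^{\ast}b\|^2=\|f-a\|^2-2y^{\ast}\langle f-a,b\rangle+(y^{\ast})^2$ and $\langle f-a-y^{\ast}b,b\rangle=\langle f-a,b\rangle-y^{\ast}$. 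The $(y^{\ast})^2$ terms partially cancel, leaving $(1-\eta)^2\bigl(\|f-a\|^2-2y^{\ast}\langle f-a,b\rangle+(y^{\ast})^2\bigr)=\langle f-a,b\rangle^2-2y^{\ast}\langle f-a,b\rangle+(y^{\ast})^2$, i.e.\ a quadratic $\eta(2-\eta)(y^{\ast})^2 - 2\eta(2-\eta)\langle f-a,b\rangle\,y^{\ast} + \bigl(\langle f-a,b\rangle^2-(1-\eta)^2\|f-a\|^2\bigr)=0$ (after grouping the linear coefficient, which also carries a factor $\eta(2-\eta)$). Solving and simplifying the discriminant should produce exactly $y^{\ast}=H^{\eta}_{a,b}(f)$ as in (\ref{4.3}), with the minus sign before the square root being the relevant root (the larger root corresponds to the opposite nappe / the sign constraint $\langle f-a-y^{\ast}b,b\rangle\ge 0$).

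Next I would reconcile this pointwise formula with the three cases of Lemma \ref{lem:1}. When $H^{\eta}_{a,b}(f)$ computed from (\ref{4.3}) falls in $(0,k\epsilon)$, it is the maximizer and equals $\phi^{(k)}_{a,b}(f)$; when the unconstrained boundary-hitting value would be negative — which happens precisely when $\langle f-a,b\rangle$ is too small relative to the orthogonal component, so that $f-a\notin K_b(\eta)$ even for $y=0$ — the scalarization set is empty or $\{0\}$ and $\phi^{(k)}_{a,b}(f)=0$, consistent with $(H^{\eta}_{a,b}(f))^{+}=0$; and when the cone can be pushed the full length of the segment, $\phi^{(k)}_{a,b}(f)=k\epsilon$, the boundary case of Lemma \ref{lem:1}. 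In all three situations $\phi^{(k)}_{a,b}(f)=\min\{(H^{\eta}_{a,b}(f))^{+},k\epsilon\}$. Then taking the supremum over $f\in D$ I would argue that $\min$ and $(\cdot)^{+}$ commute with the supremum here: $\sup_f \min\{(H^{\eta}_{a,b}(f))^{+},k\epsilon\}=\min\{\sup_f (H^{\eta}_{a,b}(f))^{+},k\epsilon\}=\min\{(\sup_f H^{\eta}_{a,b}(f))^{+},k\epsilon\}$, using monotonicity of $t\mapsto\min\{t,k\epsilon\}$ and of $t\mapsto t^{+}$ (both nondecreasing, so they pass through suprema). This yields (\ref{4.2}).

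The main obstacle I anticipate is the case analysis around the square-root step rather than the algebra itself: one must verify that the relevant root of the quadratic is indeed the minus-sign branch, that this root is real (the quantity $\|f-a\|^2-\langle f-a,b\rangle^2$ under the radical is nonnegative by Cauchy–Schwarz, so that is automatic, but $\eta(2-\eta)>0$ for $\eta\in(0,1]$ needs the observation $2-\eta\ge 1>0$), and — most delicately — that the sign constraint $\langle f-a-y^{\ast}b,b\rangle\ge 0$ inherent in belonging to $K_b(\eta)$ is automatically satisfied by $y^{\ast}=H^{\eta}_{a,b}(f)$ when the latter is positive; this is what forces the choice of root and also what justifies squaring in the first place. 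A secondary subtlety is handling the degenerate case $\eta=1$ (then the radical vanishes and $H^{1}_{a,b}(f)=\langle f-a,b\rangle$, recovering the half-space scalarization discussed after Theorem \ref{thm:1}), and the case where $f-a-y^{\ast}b$ lies on the apex of the cone (the zero vector), which is harmless since it is covered by $\phi^{(k)}_{a,b}(f)\in\{0,k\epsilon\}$ or by continuity. Once the pointwise identity $\phi^{(k)}_{a,b}(f)=\min\{(H^{\eta}_{a,b}(f))^{+},k\epsilon\}$ is nailed down, passing to the supremum is routine.
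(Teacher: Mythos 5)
Your proposal follows essentially the same route as the paper: invoke Lemma \ref{lem:1} to reduce to the boundary equation $(1-\eta)\|f-a-y^{\ast}b\|=\langle f-a-y^{\ast}b,b\rangle$, square it into the quadratic $\eta(2-\eta)\big(y^{2}-2\langle f-a,b\rangle y\big)+\langle f-a,b\rangle^{2}-(1-\eta)^{2}\|f-a\|^{2}=0$, select the minus-sign root via the sign constraint $\langle f-a-yb,b\rangle\geq0$ to get $y^{\ast}=H^{\eta}_{a,b}(f)$, and then clamp to $[0,k\epsilon]$ and pass to the supremum. Your additional bookkeeping (the three-case reconciliation with Lemma \ref{lem:1} and the observation that $t\mapsto t^{+}$ and $t\mapsto\min\{t,k\epsilon\}$ are nondecreasing and hence commute with $\sup_{f\in D}$) only makes explicit what the paper leaves implicit, so the argument is correct and not materially different.
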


\begin{proof}
That $V^{(k)}_{a,b}\le k\epsilon$ is obvious by definition. We need only show that $\phi^{(k)}_{a,b}(\cdot)$ and $H_{a,b}^{\eta}(\cdot)$ have the same maximizer.
In light of Lemma \ref{lem:1}, we know that $y^*=y^*(f)=\phi^{(k)}_{a,b}(f)$ satisfies that $f-a-y^*(f)b\in\pd K_{b}(\eta)$, for  $f\in D$ and given $a,b,k$. That is, $y^*$ solves the following equation:
\begin{equation}\label{4.4}
0\leq  \langle f-a-yb,b\rangle=(1-\eta)\|f-a-yb\|,\quad y\in\mathbb{R}.
\end{equation}
Noting that $\|b\|=1$, by rearranging (\ref{4.4}) we obtain
\begin{equation*}
  \eta(2-\eta)(y^{2}-2\langle f-a,b\rangle y)+\langle f-a,b\rangle^{2}-(1-\eta)^{2}\|f-a\|^{2}=0\quad\text{s.t. }\langle f-a-yb,b\rangle\geq0.
\end{equation*}
We see that $y^*$ must coincide with the (unique) solution $y^*=H_{a,b}^{\eta}(f)\in\mathbb{R}$, thanks to (\ref{4.3}). Combining this with the fact $y^*\in[0,k\epsilon]$, we have
$\phi^{(k)}_{a,b}(f)=\min\big\{(H_{a,b}^{\eta}(f))^{+},k\epsilon\big\}$,
proving (\ref{4.2}).
\end{proof}

\begin{remark}\label{rem:5}
The second term on the right side of (\ref{4.3}) implies that $H^{\eta}_{a,b}(f)$ is generally a nonlinear function of $f\in D$. This feature is a result of the constant $\eta\in(0,1]$, which measures the sharpness of the exterior (circular) cone $K_{\nu}(\eta)$ in Assumption \ref{as:1}, or the ``curvature'' of the (nonconvex) boundary (recall (\ref{3.1})).
\qed
\end{remark}

We now turn our attention to a special application in multi-valued optimization problems in which $f:\mathcal{A}\mapsto\mathbb{R}^{m}$
is a given multi-objective mapping, and $D=\cl\{f(x):\;x\in\mathcal{A}\}$. In such a case, the reformulation (\ref{4.2}) is then to be implemented as
\begin{equation}\label{4.5}
  V^{(k)}_{a,b}=\min\Big\{\Big(\sup_{x\in\mathcal{A}}H^{\eta}_{a,b}(f(x))\Big)^{+},k\epsilon\Big\},\quad k\in\mathbb{Z}_{++},
\end{equation}
where $H^{\eta}_{a,b}$ is given by (\ref{4.3}).

Of course, following the discussion in Section \ref{sec:3}, in the case where $\epsilon\to\infty$ is allowed (e.g., if $D$ is bean-shaped or even a convex domain), then there is only one scalarized problem to solve in (\ref{3.4}) for each parameter $a$ and $b$, and consequently (\ref{4.5}) also reduces to a single problem: $V^{(1)}_{a,b}=\big(\sup_{x\in\mathcal{A}}H^{\eta}_{a,b}(f(x))\big)^{+}$. Further, when $D$ is convex, then we can simply take $\eta=1$, and the second term in (\ref{4.3}) disappears, making $H^{\eta}_{a,b}(f)=\langle f-a,b\rangle$ a linear function; hence, (\ref{4.5}) becomes a standard linear optimization problem: $V^{(1)}_{a,b}=\big(\sup_{x\in\mathcal{A}}\langle f-a,b\rangle\big)^{+}$.


Based on (\ref{4.5}), we proceed to consider the following (single-criterion) optimization problem:
\begin{equation}\label{4.6}
  F_{a,b}:=\sup_{x\in\mathcal{A}}H^{\eta}_{a,b}(f(x)),
\end{equation}
with given parameters $a,b$ from the scalarization and the function in (\ref{4.3}). To simplify things, let us further denote
\begin{equation}\label{4.7}
  \theta=\frac{1-\eta}{\sqrt{\eta(2-\eta)}}\geq0,
\end{equation}
which allows to rewrite the function as
\begin{equation*}
  H^{\eta}_{a,b}(f(x))=\langle f(x)-a, b\rangle-\theta \sqrt{\|f(x)-a\|^2-\langle f(x)-a, b\rangle^2}=\langle f(x)-a, b\rangle-\theta\|\mbox{Proj}_{b^\perp}\{f(x)-a\}\|,
\end{equation*}
where $b^\perp\subset\mathbb{R}^m$ stands for the $(m-1)$-dimensional orthogonal subspace of $b\in\mathbb{R}^{m}$. Now, by letting $\{b^{\perp,(j)}:j\in\mathbb{Z}\cap[1,m-1]\}$ be an orthonormal basis of $b^\perp$,\footnote{Equivalently, $\{b^{\perp,(j)}:j\in\mathbb{Z}\cap[1,m-1]\}\cup\{b\}\subset\overline{B}(1)$ forms an orthonormal basis of $\mathbb{R}^{m}$.} we can recast (\ref{4.6}) into the form
\begin{equation}\label{4.8}
  F_{a,b}=\sup_{x\in\mathcal{A}}\Bigg\{\langle f(x)-a,b\rangle-\theta\sqrt{\sum^{m-1}_{j=1}\langle f(x)-a,b^{\perp,(j)}\rangle^{2}}\Bigg\}.
\end{equation}

The optimization problem (\ref{4.8}) can be solved in two stages. In the first stage, we fix an arbitrary $l=(l_{1},\dots,l_{m-1})^{\intercal}$ (vector of control) and solve the following (possibly stochastic) constrained problem with exactly $m-1$ linear constraints:
\begin{equation*}
  G_{a,b}(l):=\sup_{x\in\mathcal{A}}\langle f(x),b\rangle\quad\text{s.t.}\quad\langle f(x)-a,b^{\perp,(j)}\rangle=l_{j}\in\mathbb{R},\;j\in\mathbb{Z}\cap[1,m-1];
\end{equation*}
we make the convention that $G_{a,b}(l)=-\infty$ if any constraint fails to hold by any $x\in\mathcal{A}$. By defining the Lagrangian with Lagrange multipliers $\lambda_{l_{j}}\in\mathbb{R}$, $j\leq m-1$, we can equivalently consider the unconstrained problem
\begin{equation}\label{4.9}
  G_{a,b}(l):=\sup_{x\in\mathcal{A}}\Bigg\{\langle f(x),b\rangle+\sum^{m-1}_{j=1}\lambda_{l_{j}}(\langle f(x),b^{\perp,(j)}\rangle-\langle a,b^{\perp,(j)}\rangle-l_{j})\Bigg\}.
\end{equation}
The solution to (\ref{4.9}) is understood to be $x^{\ast}\equiv x^{\ast}(\lambda^{\ast}_{l})$, where $\lambda^{\ast}_{l}=(\lambda^{\ast}_{l_{1}},\dots,\lambda^{\ast}_{l_{m-1}})^{\intercal}$ is such that $\langle f(x^{\ast}(\lambda^{\ast}_{l})),b^{\perp,(j)}\rangle=\langle a,b^{\perp,(j)}\rangle+l_{j}$.

The second stage then consists in solving an unconstrained problem with respect to the vector $l$, i.e.,
\begin{equation}\label{4.10}
  F_{a,b}=\sup_{l\in\mathbb{R}^{m-1}}\big\{G_{a,b}(l)-\langle a,b\rangle-\theta\|l\|\big\},
\end{equation}
where $\theta$ satisfies (\ref{4.7}). Only the problem in (\ref{4.9}) may involve stochastic optimization, while (\ref{4.10}) is entirely deterministic.

We also note that the optimal control $x^{\ast}$ obtained from solving (\ref{4.10}) is generally \textsl{not} exhaustive for the original problem $V^{(k)}_{a,b}=\sup_{x\in\mathcal{A}}\phi^{(k)}_{a,b}(f(x))$, for $k\in\mathbb{Z}_{++}$ given, as suggested by (\ref{3.4}), simply because the function $\min\{(\cdot)^{+},k\epsilon\}$, despite being monotone, is not strictly so. Nonetheless, the value functions can still be completely recovered as
\begin{equation*}
  V^{(k)}_{a,b}=\min\big\{F^{+}_{a,b},k\epsilon\big\},\quad k\in\mathbb{Z}_{++},
\end{equation*}
where $F_{a,b}$ is the outcome of (\ref{4.10}), with $a,b$ given.

\smallskip

\subsection{A numerical example}\label{sec:4.3}

In this section, we design a 2D deterministic optimization example as a test problem for the proposed characterization and the solution techniques discussed above.

Consider a 2D admissibility set $\mathcal{A}=\{(x_{1},x_{2})^{\intercal}\in\mathbb{R}^{2}_{+}:\;x_{1}+x_{2}\leq1\}$, along with the (objective) function
\begin{equation}\label{4.1.1}
  f(x)=\bigg(\sqrt{x^{2}_{1}+2x^{2}_{2}},\;\cos(2x_{1}+x^{2}_{2})-e^{-x^{2}_{2}}+\frac{\sin(3x_{1}x_{2})}{3}\bigg)^{\intercal},\quad x\in\mathcal{A},
\end{equation}
whose second component is a highly nonconvex function, and so the desired boundary $\pd D=\pd\cl\{f(x):\;x\in\mathcal{A}\}\subset\mathbb{R}^{2}$ will likely have nonconvex fronts.

Assume that $D$ satisfies condition (\ref{3.1}) with arbitrary $r>0$ and $\eta=1-\cos(\pi/8)$.\footnote{In practice, assumptions alike can always be verified numerically by reducing the values and observing whether significant changes occur. Nonetheless, we omit these details here as the current focus is not on numerics.} By consulting Theorem \ref{thm:1} and Theorem \ref{thm:3}, the boundary can be written as
\begin{align}\label{4.1.2}
  \pd D&=\cl\Big\{f(x):\;x\in\underset{x\in\mathbb{R}^{2}_{+},\;x_{1}+x_{2}\leq1}{\arg\max}H^{\eta}_{a,b}(f(x)),\;\exists a\in\mathbb{R}^{2},\;b\in\pd\overline{B}(1)\text{ s.t. }H^{\eta}_{a,b}(f(x))\geq0\Big\} \nonumber\\
  &=\cl\Big\{f(x):\;x\in\underset{x\in\mathbb{R}^{2}_{+},\;x_{1}+x_{2}\leq1}{\arg\max}H^{\eta}_{a,b}(f(x)),\;\exists a\in P_{b}(\iota),\;b\in\pd\overline{B}(1)\text{ s.t. }H^{\eta}_{a,b}(f(x))\geq0\Big\},
\end{align}
where the second equality follows from the (clear) boundedness of $D$ and Theorem \ref{thm:2}, with $P_{b}(\iota)$ being the parametric range in (\ref{3.9}) and the function $H^{\eta}_{a,b}$ is given by (\ref{4.3}).

\begin{figure}[H]
  \centering
  \includegraphics[scale=0.4]{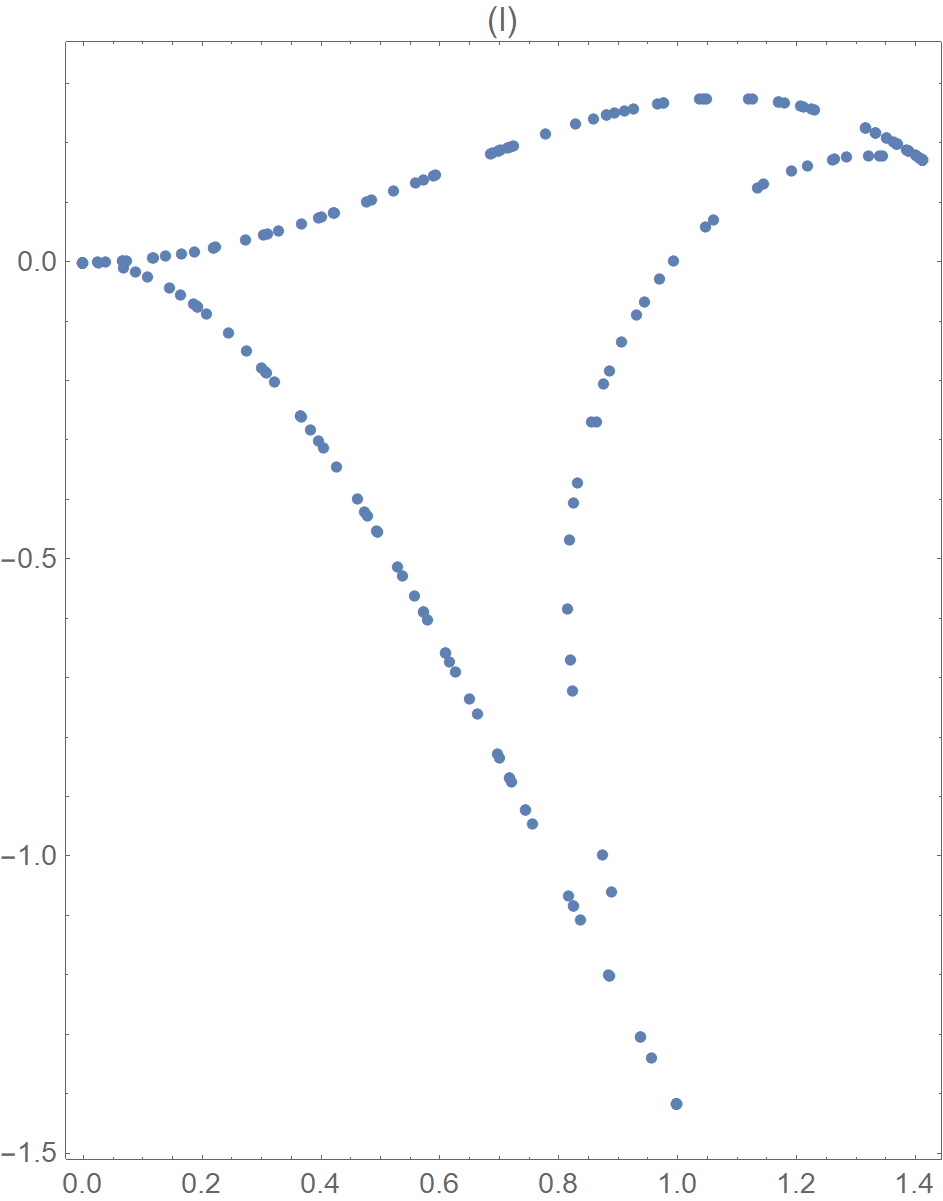} \quad
  \includegraphics[scale=0.4]{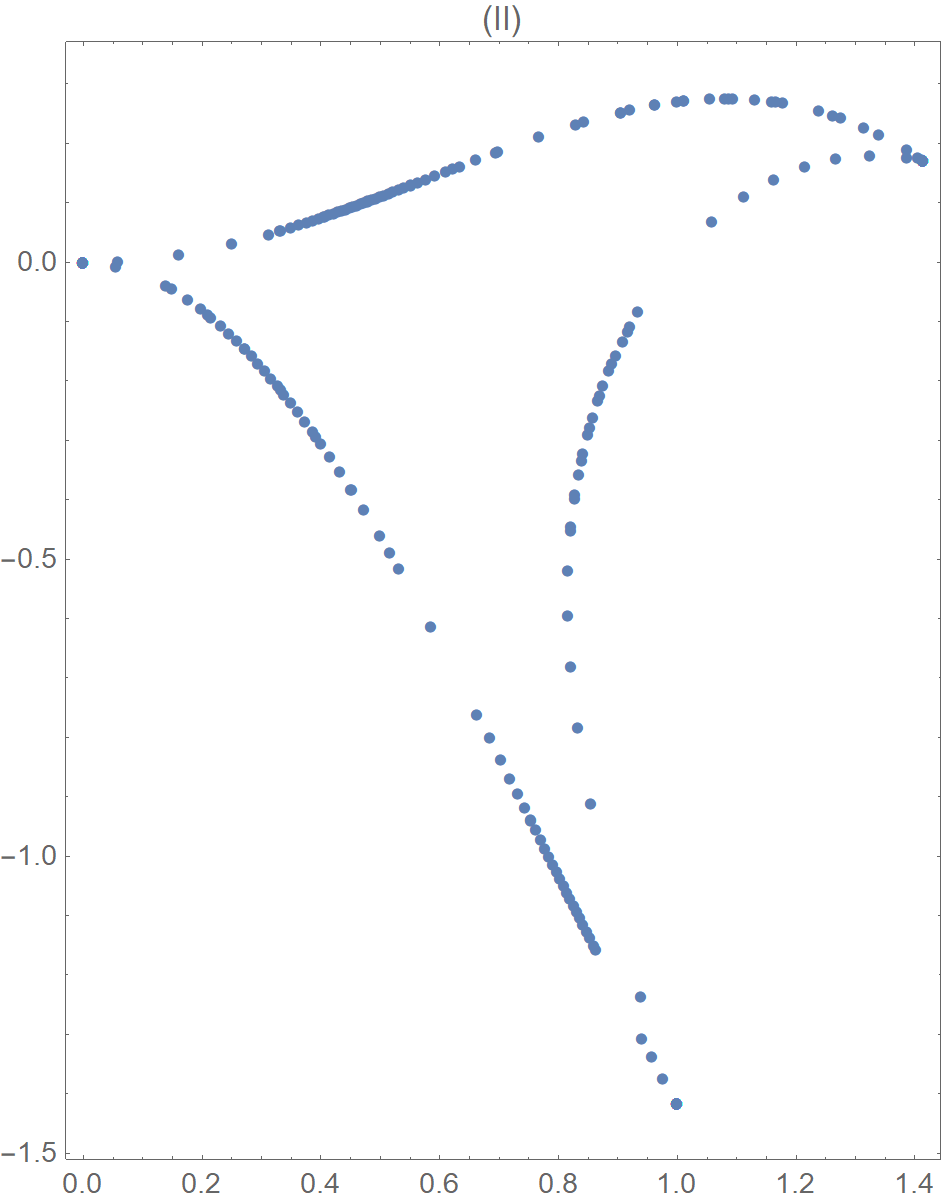} \\
  \caption{Approximations of scalarization-implied boundary}
  \label{fig:4}
\end{figure}

Due to the relative simplicity of the first component of $f$ (in (\ref{4.1.1})), we take $\iota=2$. It is then easy to verify that $P_{b}(2)\subset[-7/3,\sqrt{2}+7/3]\times\{-7/3,4/3\}$, for any $b\in\pd\overline{B}(0)$. To generate an approximation of (\ref{4.1.2}), we use 100 evenly spaced unit vectors for the orient, $b_{i}=(\cos(2\pi i/100),\sin(2\pi i/100))^{\intercal}$, for $i\in\mathbb{Z}\cap[1,100]$; for the base $a$, depending on whether $b_{2}\geq0$ or $b_{2}<0$, we set $a_{2}=-7/3$ or $a_{2}=4/3$, respectively, and evaluate nine uniformly spaced values of $a_{1}$ within the interval $[-7/3,\sqrt{2}+7/3]$. Each optimization problem involved in (\ref{4.1.2}) is then solved numerically by following the procedures outlined before (see (\ref{4.9}) and (\ref{4.10})), with $m=2$. The approximated scalarization-implied boundary is shown in the left panel of Figure \ref{fig:4}. For comparison, we repeat the approximation using four randomly chosen points $\{(0,0)^{\intercal},(1,1)^{\intercal},(1,-1)^{\intercal},(1/2,0)^{\intercal}\}\ni a$, while keeping other things equal; results are shown in the right panel.

It is seen that the approximation based on the reduced parametric range $P_{b}(2)$ tends to generate more evenly distributed boundary points, while both approximations effectively capture the shape of the boundary $\pd D$.

\medskip

\section{Extension to infinite-dimensional spaces}\label{sec:5}

The results in the previous sections can be extended to a topological vector space $\mathcal{X}$ in lieu of $\mathbb{R}^{m}$, with $D\in\mathrm{Cl}(\mathcal{X})$. While such a topological vector space can be arbitrarily general in theory, we require meaningful connections to the finite-dimensional setting to facilitate imposing a suitable topological structure and for the subsequent analysis to remain well-founded.

We give some motivation for considering such dimensional infiniteness here in the context of games. Consider a generic game with $N\geq2$ players indexed by $i=n\in\mathbb{Z}\cap[1,N]$. Let $\alpha_{i}\in A_{i}$ denote the strategy of the $i$th player, who aims to maximize some objective function $J_{i}:\prod^{N}_{i=1}A_{i}\mapsto\mathbb{R}$, and suppose that $(\alpha^{\star}_{i};\alpha^{\star}_{\neg i})$ denotes the tuple of the $i$th player's optimal strategy given the (optimal) strategies of the other $N-1$ players. The equilibrium is understood in the sense of Nash as some optimal strategy profile $\alpha^{\star}\in\prod^{N}_{i=1}A_{i}$, and the corresponding game values are given by $(J_{1}(\alpha^{\star}_{1}),\dots,J_{N}(\alpha^{\star}_{N}))\in\mathbb{R}^{N}$; see, e.g.,
\cite{FRZ22} and 
\cite[\S3]{IZ24a}.

In many applications such as finance and economics, the players can be considered indistinguishable in that they have the same dynamics and objective functions, except for randomness in their individual states. Thus, instead of emphasizing the index $n$ of each player, what matters is the empirical distribution of states $\mathbf{m}_{N}=(1/N)\sum^{N}_{n=1}\delta_{i_{n}}$, where $i$ becomes the state of the $n$th player and $\delta$ denotes the dirac measure. In very large populations, as $N\to\infty$, the empirical distribution of states converges to some deterministic measure $\mathbf{m}$, which allows to directly track the density of players -- instead of tracking finitely many individuals. As a result, the game values are now in some function space with a domain $\bar{\mathcal{I}}$ for the continuous-valued state $i$, on which integrability and continuity structures can be conveniently applied; see, e.g., 
\cite[\S4]{IZ24a}. More detailed discussion can be found in Section \ref{sec:6.1}.

With the above consideration, we now let $\bar{\mathcal{I}}\in\mathrm{Conv}(\mathbb{R}^{m})$ be a convex domain, assumed to be of full dimensionality, i.e., $\mathrm{Leb}_{\mathbb{R}^{m}}(\bar{\mathcal{I}})>0$. We shall focus on two cases: the Lebesgue space of square-integrable functions and the space of bounded continuous functions, denoted respectively as $\mathcal{X}=\mathbb{L}^{2}(\bar{\mathcal{I}};\mathbb{R})$ and $\mathcal{X}=\mathcal{C}_{\rm b}(\bar{\mathcal{I}};\mathbb{R})$. We note that $\mathbb{L}^{2}(\bar{\mathcal{I}};\mathbb{R})$ is a Hilbert space equipped with the usual inner product $\langle\cdot,\cdot\rangle_{\bar{\mathcal{I}}}$ over the domain $\bar{\mathcal{I}}$ and the $\mathbb{L}^{2}$-norm $\|\cdot\|_{2}$. On the other hand, $\mathcal{C}_{\rm b}(\bar{\mathcal{I}};\mathbb{R})$ is a Banach space equipped with the uniform norm $\|\cdot\|_{\infty}$, and its \textsl{dual space} is the space $\mathcal{M}(\bar{\mathcal{I}};\mathbb{R})$ of (signed) Radon measures of bounded variation restricted to $\bar{\mathcal{I}}$, equipped with the total variation norm $\|\cdot\|_{\rm TV}$. In this case, we shall reuse $\langle\cdot,\cdot\rangle_{\bar{\mathcal{I}}}$ to denote the \textsl{duality pairing} between $\beta\in\mathcal{C}_{\rm b}(\bar{\mathcal{I}};\mathbb{R})$ and $\nu\in\mathcal{M}(\bar{\mathcal{I}};\mathbb{R})$ over the domain $\bar{\mathcal{I}}$, understood as the Stieltjes integral $\langle\beta,\nu\rangle_{\bar{\mathcal{I}}}=\int_{\bar{\mathcal{I}}}\beta_{i}\nu(\dd i)$. Furthermore, we shall emphasize the function space in the closure, interior, and boundary operators by writing the subscripts ``$\mathbb{L}^{2}$'' and ``$\mathcal{C}_{\rm b}$'' accordingly, as $\cl_{\mathbb{L}^{2}}$, $\cl_{\mathcal{C}_{\rm b}}$, etc.

The notion of convex cones and pointedness readily extends to these two function spaces. For $\mathcal{X}=\mathbb{L}^{2}(\bar{\mathcal{I}};\mathbb{R})$ and $ \mathcal{C}_{\rm b}(\bar{\mathcal{I}};\mathbb{R})$, respectively, we define
a generic circular cone by
\begin{equation}\label{5.1}
  {K}_{\mathcal{X},\nu}(\eta):=\{\beta\in\mathcal{X}:\;  (1-\eta)\|\beta\|_{\mathcal{X}}\leq\langle\beta,\nu\rangle_{\bar{\mathcal{I}}}\},
\end{equation}
where $\eta\in(0,1]$, and $\nu\in \mathcal{X}^*$ (dual space) with $\|\nu\|_{\mathcal{X}^*}=1$. Clearly, for the spherical cone in $\mathcal{X}=\mathbb{L}^{2}(\bar{\mathcal{I}};\mathbb{R})$, we can take the closed ball  $\overline{B}(r)=\{\beta\in\mathbb{L}^{2}(\bar{\mathcal{I}};\mathbb{R}):\;\|\beta\|_{2}\leq r\}$, and by the Rietz representation, in (\ref{5.1}) $\langle\cdot, \cdot\rangle_{\bar{\mathcal{I}}}$ is simply understood as the $\mathbb{L}^{2}$-inner product, and $\nu\in\mathbb{L}^{2}(\bar{\mathcal{I}};\mathbb{R})$ with $\|\nu\|_{2}=1$. However, since
 $\mathcal{X}=\mathcal{C}_{\rm b}(\bar{\mathcal{I}};\mathbb{R})$ is nonreflexive with $\mathcal{X}^*=\mathcal{M}(\bar{\mathcal{I}};\mathbb{R})$, we shall consider the closed balls
 $\overline{B}_{\mathcal{C}_{\rm b}}(r)=\{\beta\in\mathcal{C}_{\rm b}(\bar{\mathcal{I}};\mathbb{R}):\;\|\beta\|_{\infty}\leq r\}$ and $\overline{B}_{\mathcal{M}}(r)=\{\nu\in\mathcal{M}(\bar{\mathcal{I}};\mathbb{R}):\;\|\nu\|_{\rm TV}\leq r\}$, respectively.
 In this case, the   bracket $\langle\cdot, \cdot\rangle_{\bar{\mathcal{I}}}$ in (\ref{5.1}) is understood as the duality pairing.

Before proceeding further, we would like to point out that, although it is well-known that the positive cone $\mathbb{L}^{2}(\bar{\mathcal{I}};\mathbb{R}_{+})$ has an empty interior,
it is clear that $\Int_{\mathbb{L}^{2}}K_{\mathbb{L}^{2}, \nu}(\eta)=\{\beta\in\mathbb{L}^{2}(\bar{\mathcal{I}};\mathbb{R}):\; (1-\eta)\|\beta\|_{2}<\langle\beta,\nu\rangle_{\bar{\mathcal{I}}}\}$ and $\Int_{\mathcal{C}_{\rm b}}K_{ \mathcal{C}_{\rm b},\nu}(\eta)=\{\beta\in\mathcal{C}_{\rm b}(\bar{\mathcal{I}};\mathbb{R}):\; (1-\eta)\|\beta\|_{\infty}<\langle\beta,\nu\rangle_{\bar{\mathcal{I}}}\}$, respectively, are both nonempty. Thus, the corresponding spherical cones $ {K}^r_{\mathbb{L}^{2}, \nu}(\eta)$ and ${K}^r_{\mathcal{C}_{\rm b},\nu}(\eta)$ (\text{resp.} $\breve{K}^r_{\mathbb{L}^{2},\nu}(\eta)$ and $\breve{K}^r_{\mathcal{C}_{\rm b},\nu}(\eta)$), defined by intersecting the respective circular cones (\text{resp.} their interiors) with the closed balls $\overline{B}_{\mathbb{L}^{2}}(r)$ and $\overline{B}_{\mathcal{C}_{\rm b}}(r)$, $r\in(0,\infty]$, are all meaningful.

For $\mathcal{X}=\mathbb{L}^{2}(\bar{\mathcal{I}};\mathbb{R})$ or $\mathcal{C}_{\rm b}(\bar{\mathcal{I}};\mathbb{R})$, we can define multi-criteria optimization problems associated with the partial order $\succeq_{r}\equiv\succeq_{K^{r}_{\mathcal{X},\nu}(\eta)}$, similar to (\ref{2.2}). More precisely, we consider the problem
\begin{equation}\label{5.3}
  \sup_{x\in\mathcal{A}}f(x)\equiv\sup_{x\in\mathcal{A}}(f_{i}(x):\;i\in\bar{\mathcal{I}}),
\end{equation}
where $\mathcal{A}$ is the same admissibility set,  $f:\mathcal{A}\mapsto\mathcal{X}$ is the objective function, and $D=\cl_{\mathcal{X}}\{f(x):\; x\in\mathcal{A}\}$. The notion of maximality when $\mathcal{X}=\mathbb{L}^{2}(\bar{\mathcal{I}};\mathbb{R})$ or $\mathcal{X}=\mathcal{C}_{\rm b}(\bar{\mathcal{I}};\mathbb{R})$ is understood as follows.

\begin{definition}
\label{def:3}
For $\mathcal{X}=\mathbb{L}^{2}(\bar{\mathcal{I}};\mathbb{R})$ or $\mathcal{C}_{\rm b}(\bar{\mathcal{I}};\mathbb{R})$, let ${K}^r_{\mathcal{X}, \nu}(\eta)$ (\text{resp.} $\breve{K}^r_{\mathcal{X}, \nu}(\eta)$)
be the spherical cone (\text{resp.} its partially open counterpart) based on (\ref{5.1}) and $D\in\mathrm{Cl}(\mathcal{X})$. A control $x^{\ast}\in\mathcal{A}$ is said to be a (weakly) ${K}^r_{\mathcal{X},\nu}(\eta)$-maximal solution to the problem (\ref{5.3}) if $(f(x^{\ast})+\breve{K}^r_{\mathcal{X},\nu}(\eta))\cap D=\emptyset$.
\qed
\end{definition}

We now try to extend the boundary characterization results in Section \ref{sec:3} and Section \ref{sec:4.2} into the infinite-dimensional setting. Let us first consider the case $\mathcal{X}=\mathbb{L}^{2}(\bar{\mathcal{I}};\mathbb{R})$, where $\bar{\mathcal{I}}\in\mathrm{Conv}(\mathbb{R}^{m})$ satisfies $\mathrm{Leb}_{\mathbb{R}^{m}}(\bar{\mathcal{I}})>0$. By reflexivity, the following regularity assumption is natural following Assumption \ref{as:1}.
\begin{assumption}
\label{as:2}
Let $D\in\mathrm{Cl}(\mathbb{L}^{2}(\bar{\mathcal{I}};\mathbb{R}))$. There exist constants $r>0$ and $\eta\in(0,1]$ such that for every $f\in\pd D$,
\begin{equation}\label{5.4}
  (f+\breve{K}^r_{\mathbb{L}^{2},\nu}(\eta))\cap D=\emptyset,
\end{equation}
for some $\nu\equiv\nu_{f}\in\pd_{\mathbb{L}^{2}}\overline{B}_{\mathbb{L}^{2}}(1)$, with $\pd_{\mathbb{L}^{2}}\overline{B}_{\mathbb{L}^{2}}(1)=\{\beta\in\mathbb{R}^{m}:\;\|\beta\|_{2}=1\}$ and $\breve{K}^r_{\mathbb{L}^{2},\nu}(\eta)$ based on (\ref{5.1}).
\qed
\end{assumption}

Analogous to (\ref{3.2}), to characterize the boundary $\pd_{\mathbb{L}^{2}}D$ we introduce the
scalarization sets
\begin{equation}\label{5.6}
  \mathfrak{S}^{(k)}_{a,b}(f)=\{y\in[0,k\epsilon]:\;f-a-yb\in{K}^{\epsilon}_{\mathbb{L}^{2},b}(\eta)\},\quad f\in D,\;k\in\mathbb{Z}_{++},\;\epsilon=\frac{r}{3},
\end{equation}
where $a\in\mathbb{L}^{2}(\bar{\mathcal{I}};\mathbb{R})$ and $b\in\pd_{\mathbb{L}^{2}}\overline{B}_{\mathbb{L}^{2}}(1)$,
and then define the scalarization functions
\begin{equation}\label{5.5}
  \phi^{(k)}_{a,b}(f):=\sup_{y\in\mathfrak{S}^{(k)}_{a,b}(f)}y,\quad a\in\mathbb{L}^{2}(\bar{\mathcal{I}};\mathbb{R}),~b\in\pd_{\mathbb{L}^{2}}\overline{B}_{\mathbb{L}^{2}}(1),~k\in\mathbb{Z}_{++}.
\end{equation}
Then, the scalarization-implied boundary is defined as
\begin{equation}\label{5.7}
  \pd_{\mathbb{L}^{2},\rm s}D:=\big\{f\in D:\;\phi^{(k)}_{a,b}(f)=V^{(k)}_{a,b},\;\exists a\in\mathbb{L}^{2}(\bar{\mathcal{I}};\mathbb{R}),\;b\in\pd_{\mathbb{L}^{2}}\overline{B}_{\mathbb{L}^{2}}(1),\;k\in\mathbb{Z}_{++}\text{ s.t. }V^{(k+1)}_{a,b}=V^{(k)}_{a,b}\big\},
\end{equation}
where, for $k\in\mathbb{Z}_{++}$,
\begin{equation}\label{5.8}
  V^{(k)}_{a,b}=\sup_{f\in D}\phi^{(k)}_{a,b}(f).
\end{equation}

\smallskip

For the case $\mathcal{X}=\mathcal{C}_{\rm b}(\bar{\mathcal{I}};\mathbb{R})$, the orient $\nu$ of the circular cone in (\ref{5.1}) is understood in the dual space $\mathcal{X}^*=\mathcal{M}(\bar{\mathcal{I}};\mathbb{R})$, and $\langle\cdot,\cdot\rangle$ is the duality pairing. The regularity assumption then becomes the following.

\begin{assumption}\label{as:3}
Let $D\in\mathrm{Cl}(\mathcal{C}_{\rm b}(\bar{\mathcal{I}};\mathbb{R}))$. There exist constants $r>0$ and $\eta\in(0,1]$ such that for every $f\in\pd D$,
\begin{equation}\label{5.9}
  (f+\breve{K}^r_{\mathcal{C}_{\rm b},\nu}(\eta))\cap D=\emptyset,
\end{equation}
for some $\nu\in\pd_{\mathcal{M}}\overline{B}_{\mathcal{M}}(1)$, with $\pd_{\mathcal{C}_{\rm b}}\overline{B}_{\mathcal{C}_{\rm b}}(1)=\{\beta\in\mathbb{R}^{m}:\;\|\beta\|_{\infty}=1\}$ and $\breve{K}^r_{\mathcal{C}_{\rm b},\nu}(\eta)$ also based on (\ref{5.1}).
\end{assumption}

We stress that since $\mathcal{C}_{\rm b}(\bar{\mathcal{I}};\mathbb{R})$ is non-reflexive, there are some subtle points that need to be addressed carefully. For example, if we follow the idea of (\ref{3.3}) or (\ref{5.6}) to define the scalarization sets $\mathfrak{S}^{(k)}_{a,b}(f)$ for generic functional parameters $a\in\mathcal{C}_{\rm b}(\bar{\mathcal{I}};\mathbb{R})$ and $b\in\pd_{\mathcal{C}_{\rm b}}\overline{B}_{\mathcal{C}_{\rm b}}(1)$, then an immediate difficulty is that $b$ cannot be used as a shifting direction in $\mathcal{C}_{\rm b}(\bar{\mathcal{I}};\mathbb{R})$ because the orient of the circular cone is supposed to take values in the dual space $\mathcal{M}(\bar{\mathcal{I}};\mathbb{R})$.

To overcome this difficulty, we link each $b\in\pd_{\mathcal{C}_{\rm b}}\overline{B}_{\mathcal{C}_{\rm b}}(1)$ to a
Radon measure $\mathfrak{b}\in\mathcal{M}(\bar{\mathcal{I}};\mathbb{R})$ (orient), whose \textsl{Radon--Nikodym derivative} with respect to the Lebesgue measure is $cb\in\pd_{\mathcal{C}_{\rm b}}\overline{B}_{\mathcal{C}_{\rm b}}(c)$, with some positive scaling factor $c\equiv c_{\bar{\mathcal{I}}}$ depending on $\bar{\mathcal{I}}$.  More precisely, $\mathfrak{b}(A)=c\int_A b_i\dd i$ for any measurable subset $A\subset\bar{\mathcal{I}}$. Since $\|b\|_{\infty}=1$, it is clear that $\int_{\bar{\mathcal{I}}}b^{2}_{i}\dd i\leq\int_{\bar{\mathcal{I}}}|b_{i}|\dd i$, and hence
\begin{equation}\label{5.11}
  \|\mathfrak{b}\|_{\rm TV}=\frac{\int_{\bar{\mathcal{I}}}|b_{i}|\dd i}{\int_{\bar{\mathcal{I}}}b^{2}_{i}\dd i}\geq1,
\end{equation}
which implies that $c=1\big/\int_{\bar{\mathcal{I}}}b^{2}_{i}\dd i>0$. Then, we can consider the revised scalarization sets
\begin{equation}\label{5.10}
  \mathfrak{S}^{(k)}_{a,b}(f)=\{y\in[0,k\epsilon]:\;f-a-yb\in {K}^{\epsilon}_{\mathcal{C}_{\rm b},\mathfrak{b}}(\eta)\},\quad f\in D,\;k\in\mathbb{Z}_{++},\;\epsilon=\frac{r}{3}.
\end{equation}

Next, we proceed to define the corresponding nonlinear scalarization functions, parameterized by $a$ and $b$, as well as the associated ``value functions'':
\begin{equation}\label{5.12}
  \phi^{(k)}_{a,b}(f):=\sup_{y\in\mathfrak{S}^{(k)}_{a,b}(f)}y,\qquad
  V^{(k)}_{a,b}=\sup_{f\in D}\phi^{(k)}_{a,b}(f),\quad k\in\mathbb{Z}_{++}.
\end{equation}
From these the scalarization-implied boundary can be defined accordingly,
\begin{equation}\label{5.13}
  \pd_{\mathcal{C}_{\rm b},\rm s}D:=\big\{f\in D:\;\phi^{(k)}_{a,b}(f)=V^{(k)}_{a,b},\;\exists a\in\mathcal{C}_{\rm b}(\bar{\mathcal{I}};\mathbb{R}),\;b\in\pd_{\mathcal{C}_{\rm b}}\overline{B}_{\mathcal{C}_{\rm b}}(1),\;k\in\mathbb{Z}_{++}\text{ s.t. }V^{(k+1)}_{a,b}=V^{(k)}_{a,b}\big\}.
\end{equation}

The following result is the generalization of Theorem \ref{thm:1} in infinite dimensions, governing the equivalence between the original and scalarization-implied boundaries.

\begin{theorem}\label{thm:4}
Under Assumption \ref{as:2}, it holds that $\pd_{\mathbb{L}^{2}}D=\pd_{\mathbb{L}^{2},\rm s}D$. Similarly, under Assumption \ref{as:3}, it holds that $\pd_{\mathcal{C}_{\rm b}}D=\pd_{\mathcal{C}_{\rm b},\rm s}D$.
\end{theorem}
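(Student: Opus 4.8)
The plan is to mirror the two-step argument of Theorem \ref{thm:1} in each of the two function spaces, treating the Hilbert case $\mathcal{X}=\mathbb{L}^{2}(\bar{\mathcal{I}};\mathbb{R})$ first and then adapting to $\mathcal{X}=\mathcal{C}_{\rm b}(\bar{\mathcal{I}};\mathbb{R})$. For the $\mathbb{L}^{2}$ case, the argument should go through essentially verbatim: the proof of Theorem \ref{thm:1} uses only (i) the inner-product/norm structure of the ambient space, (ii) the fact that $b$ is a unit vector lying in the same space as the shifting direction, (iii) pointedness, solidity and closedness of the circular cones, and (iv) the inclusion $\overline{B}(\epsilon)+yb\subset\overline{B}(3\epsilon)$ for $y\in[0,2\epsilon]$. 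All four survive in $\mathbb{L}^{2}$ because it is a Hilbert space: Riesz representation identifies $\mathcal{X}^{*}$ with $\mathcal{X}$, so $b\in\pd_{\mathbb{L}^{2}}\overline{B}_{\mathbb{L}^{2}}(1)$ can simultaneously play the role of orient and of shifting direction, and the interior of $K_{\mathbb{L}^{2},b}(\eta)$ was already noted to be nonempty in the excerpt. Thus for $\mathbb{L}^{2}$, I would simply restate Step 1 and Step 2 of the proof of Theorem \ref{thm:1} with $\mathbb{R}^{m}$ replaced by $\mathbb{L}^{2}(\bar{\mathcal{I}};\mathbb{R})$, $\|\cdot\|$ by $\|\cdot\|_{2}$, and $\langle\cdot,\cdot\rangle$ by $\langle\cdot,\cdot\rangle_{\bar{\mathcal{I}}}$, noting at each cone-membership step that only the Hilbert norm and inner product are used, and concluding $\pd_{\mathbb{L}^{2}}D=\pd_{\mathbb{L}^{2},\rm s}D$.

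For $\mathcal{X}=\mathcal{C}_{\rm b}(\bar{\mathcal{I}};\mathbb{R})$ the extra work is entirely in reconciling the mismatch between the primal space (where shifts $a+yb$ live and where $D$ sits) and the dual space $\mathcal{M}(\bar{\mathcal{I}};\mathbb{R})$ (where the orient of the cone lives). The key device is already supplied in the excerpt: the map $b\mapsto\mathfrak{b}$ with $\dd\mathfrak{b}/\dd\mathrm{Leb}=cb$, $c=1/\int_{\bar{\mathcal{I}}}b_{i}^{2}\,\dd i>0$, which is well-defined and finite by (\ref{5.11}) since $\bar{\mathcal{I}}$ has positive finite Lebesgue measure and $\|b\|_{\infty}=1$ forces $0<\int b_{i}^{2}\,\dd i\leq\int|b_{i}|\,\dd i<\infty$. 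With the scalarization sets (\ref{5.10}) in hand, Step 1 proceeds as before: given $f^{\circ}\in\pd_{\mathcal{C}_{\rm b}}D$, set $a=f^{\circ}$, $b=\nu_{f^{\circ}}$ (or rather a primal representative $b$ whose associated measure is $\nu_{f^{\circ}}$; here one must be slightly careful, see below), $k=1,2$, and use pointedness of $\breve{K}^{3\epsilon}_{\mathcal{C}_{\rm b},\nu_{f^{\circ}}}(\eta)$ together with $\|b\|_{\infty}$-ball containment $\overline{B}_{\mathcal{C}_{\rm b}}(\epsilon)+yb\subset\overline{B}_{\mathcal{C}_{\rm b}}(3\epsilon)$ for $y\in[0,2\epsilon]$ to conclude $\mathfrak{S}^{(k)}_{a,b}(f)\subset\{0\}$ for all $f\in D$, hence $V^{(k)}_{a,b}=0$ for $k=1,2$ and $f^{\circ}\in\pd_{\mathcal{C}_{\rm b},\rm s}D$. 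For the reverse inclusion, given $f^{\circ}\in\pd_{\mathcal{C}_{\rm b},\rm s}D$ one argues by contradiction exactly as in Step 2: if $f^{\circ}\notin\pd_{\mathcal{C}_{\rm b}}D$, then for the relevant $b$ there is $\hat f(b)=f^{\circ}+\hat\beta\in D$ with $\hat\beta\in\breve{K}^{\epsilon/2}_{\mathcal{C}_{\rm b},\mathfrak{b}}(\eta)$; solidity of the cone interior yields $\varepsilon\in(0,\epsilon)$ with $\hat\beta+\beta^{\circ}-\varepsilon b\in\breve{K}^{\epsilon}_{\mathcal{C}_{\rm b},\mathfrak{b}}(\eta)$, producing $\hat y=y^{\circ}+\varepsilon$ that either strictly beats $V^{(k)}_{a,b}$ (if $\hat y\leq k\epsilon$) or forces $V^{(k+1)}_{a,b}>V^{(k)}_{a,b}$ (if $\hat y\in(k\epsilon,(k+1)\epsilon)$), contradicting (\ref{5.13}) in either case. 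The crucial sub-point making the cone-membership manipulations legitimate is that subtracting $\varepsilon b$ in the primal corresponds to subtracting $\varepsilon\mathfrak{b}$-worth of duality pairing, i.e. $\langle\beta-\varepsilon b,\mathfrak{b}\rangle_{\bar{\mathcal{I}}}=\langle\beta,\mathfrak{b}\rangle_{\bar{\mathcal{I}}}-\varepsilon c\|b\|_{2}^{2}=\langle\beta,\mathfrak{b}\rangle_{\bar{\mathcal{I}}}-\varepsilon$, which is exactly the linear decrement one needs, and that $\|\beta-\varepsilon b\|_{\infty}\leq\|\beta\|_{\infty}+\varepsilon$ controls the norm side.

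The main obstacle I anticipate is the well-posedness and ``uniformity'' of the primal--dual correspondence $b\leftrightarrow\mathfrak{b}$ when one needs to quantify over all orients in Assumption \ref{as:3}: the assumption supplies $\nu_{f}\in\pd_{\mathcal{M}}\overline{B}_{\mathcal{M}}(1)$, but the scalarization (\ref{5.13}) quantifies over $b\in\pd_{\mathcal{C}_{\rm b}}\overline{B}_{\mathcal{C}_{\rm b}}(1)$, and not every unit Radon measure is absolutely continuous with an $\mathcal{L}^{\infty}$-density. For Step 1 this is not a real problem because one only needs \emph{some} witnessing $(a,b,k)$, so it suffices to show the cone condition (\ref{5.9}) is inherited by a slightly smaller absolutely-continuous cone $\breve K^{3\epsilon}_{\mathcal{C}_{\rm b},\mathfrak{b}}(\eta')$ for a suitable $\eta'$ and a density $b$ — using that $\{$unit measures with bounded density$\}$ is weak-$*$ dense and that the cone condition is open; alternatively one restricts attention from the outset to domains $D$ whose exterior-cone orients can be taken absolutely continuous, which is the natural analogue of the finite-dimensional setting and presumably what the authors intend. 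I would flag this point explicitly, invoke the density/openness argument (or a standing regularity convention on $\nu_{f}$), and otherwise carry the two-step scheme through as above, the remaining estimates being routine transcriptions of the proof of Theorem \ref{thm:1} with $\|\cdot\|$ replaced by $\|\cdot\|_{\infty}$ and $\langle\cdot,\cdot\rangle$ by the duality pairing $\langle\cdot,\mathfrak{b}\rangle_{\bar{\mathcal{I}}}$.
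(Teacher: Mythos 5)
Your proposal follows essentially the same route as the paper's proof: the $\mathbb{L}^{2}$ case is dispatched as a near-verbatim transcription of the two-step argument of Theorem \ref{thm:1} (the paper likewise writes out only the $\mathcal{C}_{\rm b}$ case, remarking that reflexivity makes $\mathbb{L}^{2}$ easier), and your Step 1/Step 2 for $\mathcal{C}_{\rm b}$ --- including the correspondence $\mathfrak{b}(A)=c\int_{A}b_{i}\,\dd i$ with $c=1/\int_{\bar{\mathcal{I}}}b_{i}^{2}\,\dd i$ and the contradiction via $\hat{y}=y^{\circ}+\varepsilon$ split into the cases $\hat{y}\leq k\epsilon$ and $\hat{y}\in(k\epsilon,(k+1)\epsilon)$ --- matches the paper's argument step for step. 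The one subtlety you flag, namely that Assumption \ref{as:3} supplies orients in $\mathcal{M}(\bar{\mathcal{I}};\mathbb{R})$ while the scalarization (\ref{5.13}) quantifies over densities $b\in\pd_{\mathcal{C}_{\rm b}}\overline{B}_{\mathcal{C}_{\rm b}}(1)$, is precisely the point the paper addresses in its Step 1; it resolves it by passing through a supporting half-space and invoking the Lebesgue decomposition (using $\mathrm{Leb}_{\mathbb{R}^{m}}(\bar{\mathcal{I}})>0$) to take $\nu_{f^{\circ}}$ absolutely continuous with a continuous, square-integrable Radon--Nikodym derivative, rather than your proposed weak-$*$ density/openness perturbation --- either resolution serves the same purpose, and your identification of this as the crux is correct.
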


\begin{proof}
We only argue for the second case, i.e., $\mathcal{X}=\mathcal{C}_{\rm b}(\bar{\mathcal{I}};\mathbb{R})$. The proof for the case $\mathcal{X}=\mathbb{L}^2(\bar{\mathcal{I}};\mathbb{R})$ follows the same idea and is closer to the finite-dimensional case, hence inherently easier due to the reflexivity.

\textbf{Step 1.} First, consider an arbitrary $f^{\circ}\in\pd D$, and for a (nonzero) Radon measure $\mu$, let $H_{\mu}(f^{\circ}):=\{f\in\mathcal{C}_{\rm b}(\bar{\mathcal{I}};\mathbb{R}):\;\mu(f)=\mu(f^{\circ})\}$ be a supporting half-space of $D$ that contains $f^{\circ}$. According to Assumption \ref{as:3}, there exists a (not necessarily unique) Radon measure $\nu_{f^{\circ}}$ satisfying that $\|\nu_{f^{\circ}}\|_{\rm TV}=1$, that $\langle f,\nu_{f^{\circ}}\rangle_{\bar{\mathcal{I}}}\geq0$ for all $f\in H_{\mu}(f^{\circ})$, and that $(f^{\circ}+\breve{K}^{3\epsilon}_{\mathcal{C}_{\rm b},\nu_{f^{\circ}}}(\eta))\cap D=\emptyset$.

Moreover, since $\bar{\mathcal{I}}$ has positive Lebesgue measure (in $\mathbb{R}^{m}$), the Lebesgue decomposition theorem implies that $\mu$ can be chosen to be absolutely continuous with a continuous Radon--Nikodym derivative (say $\gamma\in\mathcal{C}_{\rm b}(\bar{\mathcal{I}};\mathbb{R})$). If so, we can also require the above measure $\nu_{f^{\circ}}$ to be absolutely continuous with a continuous Radon--Nikodym derivative. Indeed, in such a case, $H_{\mu}(f^{\circ})\equiv H_{\gamma}(f^{\circ})$ is restricted to the subspace $(\mathcal{C}_{\rm b}\cap\mathbb{L}^{2})(\bar{\mathcal{I}};\mathbb{R})$ equipped with the norm $\|\cdot\|_{\infty}+\|\cdot\|_{2}$, and $\langle\cdot,\cdot\rangle_{\bar{\mathcal{I}}}$ can be understood as the $\mathbb{L}^{2}$-inner product, based on the Radon--Nikodym derivative. Since this is obviously a closed (with respect to the norm topology) subspace of the (Hilbert) space $\mathbb{L}^{2}(\bar{\mathcal{I}};\mathbb{R})$, hence reflexive, the Radon--Nikodym derivative of $\nu_{f^{\circ}}$ is also a square-integrable bounded continuous function.

Thus, we choose $a=f^{\circ}$ and $\mathfrak{b}=\nu_{f^{\circ}}$, with the understanding that $\nu_{f^{\circ}}(A)=c\int_{A}b_{i}\dd i$ and $\|b\|_{\infty}=1$ for any measurable subset $A\subset\bar{\mathcal{I}}$, and $cb\in\pd_{\mathcal{C}_{\rm b}}\overline{B}_{\mathcal{C}_{\rm b}}(c)$. Then, from (\ref{5.10}) we have that
\begin{equation*}
  \mathfrak{S}^{(1)}_{f^{\circ},b}(f^{\circ})=\{y\in[0,\epsilon]:\;-yb\in {K}^\epsilon_{\mathcal{C}_{\rm b},\mathfrak{b}}(\eta)\};
\end{equation*}
however, since from (\ref{5.1}),
\begin{equation}\label{5.15}
 {K}_{\mathcal{C}_{\rm b}, \mathfrak{b}}(\eta)=\bigg\{\beta\in\mathcal{C}_{\rm b}(\bar{\mathcal{I}};\mathbb{R}):\;(1-\eta)\|\beta\|_{\infty}\leq c\int_{\bar{\mathcal{I}}}\beta_{i}b_{i}\dd i\bigg\},
\end{equation}
then the condition $-yb\in{K}_{\mathcal{C}_{\rm b}, \mathfrak{b}}(\eta)$ amounts to $0\le c\int_{\bar{\mathcal{I}}}(-yb_{i})b_{i}\dd i =-y\le 0$, which is only possible when $y=0$. In other words, we must have $\mathfrak{S}^{(1)}_{f^{\circ},b}(f^{\circ})=\{0\}$, and hence $V^{(1)}_{f^{\circ},b}\geq\phi^{(1)}_{f^{\circ},b}(f^{\circ})=0$.

We now claim that $\mathfrak{S}^{(1)}_{f^{\circ},b}(f)=\{0\}$, for all $f\in D$, which will lead to $V^{(1)}_{f^{\circ},b}=\phi^{(1)}_{f^{\circ},b}(f^{\circ})=0$. Indeed, since $f^{\circ}\in\pd D$, condition (\ref{5.9}) implies that $f-f^{\circ}\notin \breve{K}^{3\epsilon}_{\mathcal{C}_{\rm b},\mathfrak{b}}(\eta)$ for any $f\in D$, and from (\ref{5.10}), $f-f^{\circ}\in yb+ {K}^\epsilon_{\mathcal{C}_{\rm b},\mathfrak{b}}(\eta)$ for any $f\in D$, $y\in\mathfrak{S}^{(1)}_{f^{\circ},b}(f)$, and some $\beta\in{K}^\epsilon_{\mathcal{C}_{\rm b},\mathfrak{b}}(\eta)$. Note that if 
$\beta\in{K}^\epsilon_{\mathcal{C}_{\rm b},\mathfrak{b}}(\eta)$,  $y\in[0,2\epsilon]$, and $\|b\|_{\infty}=1$, we have that $\|yb+\beta\|_{\infty}\leq3\epsilon$.
Furthermore,
\begin{equation*}
  c\int_{\bar{\mathcal{I}}}(yb_{i}+\beta_{i})b_{i}\dd i\geq c\int_{\bar{\mathcal{I}}}\beta_{i}b_{i}\dd i\geq (1-\eta)\|\beta\|_{\infty},
\end{equation*}
that is, $f-f^\circ=yb+\beta\in\breve{K}^{3\epsilon}_{\mathcal{C}_{\rm b},\mathfrak{b}}(\eta)$, which is in contradiction with (\ref{5.9}) unless $y=0$,
as $\0\notin\Int_{\mathcal{C}_{\rm b}}K_{\mathcal{C}_{\rm b},\mathfrak{b}}(\eta)$. In other words, we must have
$\mathfrak{S}^{(1)}_{f^{\circ},b}(f)=\{0\}$ (whence $\phi^{(1)}_{f^{\circ},b}(f)=0$) for all $f\in D$, proving the claim.
In the same vein, we can verify the constraint in (\ref{5.10}) by arguing that $V^{(2)}_{f^{\circ},b}=0$.
This proves that $\pd_{\mathcal{C}_{\rm b}}D\subset\pd_{\mathcal{C}_{\rm b},\rm s}D$.

\textbf{Step 2.} To prove the other direction, let $f^{\circ}\in\pd_{\mathcal{C}_{\rm b},\rm s}D$. By the definitional condition (\ref{5.13}), there exist $a\in\mathcal{C}_{\rm b}(\bar{\mathcal{I}};\mathbb{R})$, $b\in\pd_{\mathcal{C}_{\rm b}}\overline{B}_{\mathcal{C}_{\rm b}}(1)$, and $k\in\mathbb{Z}_{++}$ such that $V^{(k)}_{a,b}=V^{(k+1)}_{a,b}$ and $\phi^{(k)}_{a,b}(f^{\circ})=V^{(k)}_{a,b}:=y^{\circ}\in[0,k\epsilon]$. Note here that the orient $\mathfrak{b}$ of the cone ${K}_{\mathfrak{b}}(\eta)$ is the Radon measure having the Radon--Nikodym derivative $cb$. Again, $\mathfrak{S}^{(k)}_{a,b}(f^{\circ})$ is a compact subset of $\mathbb{R}_{+}$. Furthermore, following the proof of Theorem \ref{thm:1}, condition (\ref{5.9}) for the boundary $\pd_{\mathcal{C}_{\rm b}}D$ also holds if the (smaller) radius $\epsilon/2$ replaces $r=3\epsilon$.

By way of contradiction, suppose that $f^{\circ}\notin\pd_{\mathcal{C}_{\rm b}}D$. This means that for any $\nu\in\pd_{\mathcal{M}}\overline{B}_{\mathcal{M}}(1)$, there exists $\hat{f}(\nu)\in D$ such that $\hat{f}(\nu)\in f^{\circ}+\breve{K}^{\epsilon/2}_{\mathcal{C}_{\rm b}, \nu}(\eta)$, or equivalently, that $\hat{f}(\nu)=f^{\circ}+\hat{\beta}$ for some $\hat{\beta}\in\breve{K}^{\epsilon/2}_{\mathcal{C}_{\rm b}, \nu}(\eta)$. In what follows we take $\nu=\mathfrak{b}$, where $\mathfrak{b}$ is absolutely continuous with (continuous) Radon--Nikodym derivative $cb$.
From the definition of $\mathfrak{S}^{(k)}_{a,b}(f^{\circ})$ in (\ref{5.10}), we set $\beta^{\circ}:=f^{\circ}-a-y^{\circ}b\in{K}^\epsilon_{\mathcal{C}_{\rm b},\mathfrak{b}}(\eta)$ and assume without loss of generality that $f^{\circ}-a-y^{\circ}b\in{K}^{\epsilon/2}_{\mathcal{C}_{\rm b},\mathfrak{b}}(\eta)$. Then, it follows that $\hat{f}(\mathfrak{b})-a-y^{\circ}b=\hat{\beta}+\beta^{\circ}\in\breve{K}^\epsilon_{\mathcal{C}_{\rm b}, \mathfrak{b}}(\eta)$. Since we know that $\Int_{\mathcal{C}_{\rm b}}{K}_{\mathfrak{b}}(\eta)\neq\emptyset$, there exists $\varepsilon\in(0,\epsilon)$ such that $\hat{\beta}+\beta^{\circ}-\varepsilon b\in\breve{K}^\epsilon_{\mathcal{C}_{\rm b}, \mathfrak{b}}(\eta)$. Hence, by defining $\hat{y}:=y^{\circ}+\varepsilon$, we have that $\hat{f}(\mathfrak{b})-a-\hat{y}b\in\breve{K}^\epsilon_{\mathcal{C}_{\rm b}, \mathfrak{b}}(\eta)$. To finish the claim, we follow step 2 in the proof of Theorem \ref{thm:1}: Based on (\ref{5.13}), if $\hat{y}\leq k\epsilon$, then $\hat{y}\in\mathfrak{S}^{(k)}_{a,b}(\hat{f})$, but $\hat{y}>y^{\circ}$, a contradiction with the fact that $V^{(k)}_{a,b}=y^{\circ}$; if $\hat{y}\in(k\epsilon,(k+1)\epsilon)$, then with $\hat{f}(\mathfrak{b})\in D$, we have that $V^{(k+1)}_{a,b}\geq\hat{y}>y^{\circ}=V^{(k)}_{a,b}$, with $V^{(k+1)}_{a,b}\neq V^{(k)}_{a,b}$ -- also a contradiction. Therefore, we obtain that $\pd_{\mathcal{C}_{\rm b}}D\supset\pd_{\mathcal{C}_{\rm b},\rm s}D$, and along with step 1, that $\pd_{\mathcal{C}_{\rm b}}D=\pd_{\mathcal{C}_{\rm b},\rm s}D$, as required.
\end{proof}


Again, the results of Theorem \ref{thm:4} elucidate a dual representation for general nonconvex boundaries in infinite-dimensional spaces satisfying the exterior cone conditions (\ref{5.4}) and (\ref{5.9}), extending related nonconvex duality results in single-criterion optimization (
\cite{AG02}).

In a manner similar to Section \ref{sec:4.2}, we can reformulate the scalarization problems in (\ref{5.8}) and (\ref{5.12}) (with (\ref{5.5})) for computing the associated implied boundaries (\ref{5.7}) and (\ref{5.13}), respectively. Results are presented in the below theorem.

\begin{theorem}\label{thm:5}
Consider the setting of Theorem \ref{thm:4}, with $\epsilon=r/3>0$ and $\eta\in(0,1]$. When $\mathcal{X}=\mathbb{L}^{2}(\bar{\mathcal{I}};\mathbb{R})$, the value function in (\ref{5.8}) can be written as
\begin{equation}\label{5.16}
  V^{(k)}_{a,b}=\sup_{f\in D}\phi^{(k)}_{a,b}(f)=\min\Big\{\Big(\sup_{f\in D}\mathcal{H}^{\eta}_{a,b}(f)\Big)^{+},k\epsilon\Big\},
\end{equation}
where
\begin{equation}\label{5.17}
  \mathcal{H}^{\eta}_{a,b}(f)=\langle f-a,b\rangle_{\bar{\mathcal{I}}}-\sqrt{\frac{(1-\eta)^{2}(\|f-a\|^{2}_{2}-\langle f-a,b\rangle^{2}_{\bar{\mathcal{I}}})}{\eta(2-\eta)}}.
\end{equation}

When $\mathcal{X}=\mathcal{C}_{\rm b}(\bar{\mathcal{I}};\mathbb{R})$, the value function in (\ref{5.13}) can be written as
\begin{equation}\label{5.18}
  V^{(k)}_{a,b}=\sup_{f\in D}\phi^{(k)}_{a,b}(f)=\min\Big\{\Big(\sup_{f\in D}\tilde{\mathcal{H}}^{\eta}_{a,b}(f)\Big)^{+},k\epsilon\Big\},
\end{equation}
where $\tilde{\mathcal{H}}^{\eta}_{a,b}(f)$ is the unique real solution to the transcendental equation
\begin{equation}\label{5.19}
  \langle f-a-\tilde{\mathcal{H}}^{\eta}_{a,b}(f)b,\mathfrak{b}\rangle_{\bar{\mathcal{I}}} =(1-\eta)\sup_{i\in\bar{\mathcal{I}}}|f_{i}-a_{i}-\tilde{\mathcal{H}}^{\eta}_{a,b}(f)b_{i}|.
\end{equation}
\end{theorem}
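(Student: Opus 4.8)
\textbf{Proof proposal for Theorem \ref{thm:5}.}

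The plan is to reduce both identities to the characterization of the scalarization function already obtained in the finite-dimensional case (Lemma \ref{lem:1} and Theorem \ref{thm:3}), adapting the argument to the Hilbert-space and Banach-space settings respectively. First I would establish an infinite-dimensional analogue of Lemma \ref{lem:1}: for fixed $f\in D$, $a$, $b$, and $k$, the optimal value $y^{\ast}=\phi^{(k)}_{a,b}(f)$ is either an endpoint of $[0,k\epsilon]$ or else it places $f-a-y^{\ast}b$ on the boundary of the circular cone. The proof is the same convexity/interior argument as in Lemma \ref{lem:1}: if $f-a-y^{\ast}b$ lay in the (nonempty) interior of ${K}_{\mathbb{L}^{2},b}(\eta)$ (resp.\ ${K}_{\mathcal{C}_{\rm b},\mathfrak{b}}(\eta)$), then a strict inequality $(1-\eta)\|f-a-y^{\ast}b\| < \langle f-a-y^{\ast}b, b\rangle_{\bar{\mathcal I}}$ persists after a small increase $y^{\ast}\mapsto y^{\ast}+\varepsilon_2$, using $\|b\|=1$ and the triangle inequality for the relevant norm, contradicting maximality. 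The only thing to check here is that $\Int K\neq\emptyset$ in each space — which was recorded explicitly in the discussion preceding Assumption \ref{as:2}.

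For $\mathcal{X}=\mathbb{L}^{2}(\bar{\mathcal I};\mathbb{R})$ the boundary equation $(1-\eta)\|f-a-yb\|_2 = \langle f-a-yb, b\rangle_{\bar{\mathcal I}}$ with $\|b\|_2 = 1$ is, after squaring and rearranging, \emph{literally} the same quadratic in $y$ as in the proof of Theorem \ref{thm:3}: the only bilinear and quadratic quantities that enter are $\langle f-a,b\rangle_{\bar{\mathcal I}}$, $\|f-a\|_2^2$, and $\|b\|_2^2 = 1$, because the Pythagorean decomposition $\|f-a\|_2^2 = \langle f-a,b\rangle_{\bar{\mathcal I}}^2 + \|\mathrm{Proj}_{b^{\perp}}(f-a)\|_2^2$ holds verbatim in any inner-product space. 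So the unique admissible root is $\mathcal{H}^{\eta}_{a,b}(f)$ as in (\ref{5.17}), and combining with $y^{\ast}\in[0,k\epsilon]$ gives $\phi^{(k)}_{a,b}(f) = \min\{(\mathcal{H}^{\eta}_{a,b}(f))^{+}, k\epsilon\}$; taking $\sup_{f\in D}$ and noting that $\min\{(\cdot)^{+},k\epsilon\}$ is nondecreasing yields (\ref{5.16}). I would also note the degenerate cases ($y^{\ast}=0$ when $f-a\notin K$, and the endpoint $k\epsilon$) exactly as in Theorem \ref{thm:3}.

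For $\mathcal{X}=\mathcal{C}_{\rm b}(\bar{\mathcal I};\mathbb{R})$ the situation is the genuinely new one, and this is the step I expect to be the main obstacle. The defining relation for $y^{\ast}$ is now $\langle f-a-yb,\mathfrak b\rangle_{\bar{\mathcal I}} = (1-\eta)\|f-a-yb\|_{\infty}$ with $\mathfrak b$ the Radon measure of density $cb$; this is \emph{not} algebraic because the sup norm does not linearize, so there is no closed form and one must simply \emph{define} $\tilde{\mathcal H}^{\eta}_{a,b}(f)$ to be its solution, as (\ref{5.19}) does. The work is therefore to show (i) existence and (ii) uniqueness of a real root. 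Write $g(y) := \langle f-a-yb,\mathfrak b\rangle_{\bar{\mathcal I}} - (1-\eta)\|f-a-yb\|_{\infty}$. The left piece is affine in $y$ with slope $-\langle b,\mathfrak b\rangle_{\bar{\mathcal I}} = -c\int_{\bar{\mathcal I}} b_i^2\,\dd i = -1 < 0$ (this is precisely why the normalization $c = 1/\!\int b_i^2\,\dd i$ was imposed in (\ref{5.11})); the right piece $-(1-\eta)\|f-a-yb\|_{\infty}$ is concave in $y$ (negative of a norm composed with an affine map) and grows at most linearly with slope bounded by $(1-\eta)\|b\|_{\infty} = 1-\eta < 1$ in absolute value. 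Hence $g$ is concave, strictly decreasing (its upper Dini derivative is $\le -1 + (1-\eta) < 0$ everywhere), with $g(y)\to +\infty$ as $y\to-\infty$ and $g(y)\to-\infty$ as $y\to+\infty$, so it has exactly one zero — giving a well-defined $\tilde{\mathcal H}^{\eta}_{a,b}(f)\in\mathbb{R}$. Continuity in $y$ is clear since $y\mapsto \|f-a-yb\|_{\infty}$ is Lipschitz. Then, arguing as before, Lemma \ref{lem:1}'s analogue identifies $\phi^{(k)}_{a,b}(f)$ with the clamped value $\min\{(\tilde{\mathcal H}^{\eta}_{a,b}(f))^{+}, k\epsilon\}$: the boundary case comes from the interior-exclusion argument, the lower clamp at $0$ from $\mathfrak S^{(k)}_{a,b}(f)\subset[0,k\epsilon]$, and the upper clamp from the same constraint; taking $\sup_{f\in D}$ and using monotonicity of $\min\{(\cdot)^{+},k\epsilon\}$ gives (\ref{5.18}). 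The one subtlety to flag is that in Step 1 of Theorem \ref{thm:4} we only guaranteed $\mathfrak b$ to have a \emph{continuous} density; since here we want (\ref{5.19}) to make sense for the $\mathfrak b$ actually used, it suffices that $\mathfrak b\ll\mathrm{Leb}$ with density $cb\in\mathcal C_{\rm b}$, which is exactly the construction in (\ref{5.10})--(\ref{5.11}), so no extra hypothesis is needed.
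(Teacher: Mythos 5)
Your proposal is correct and follows essentially the same route as the paper: extend Lemma \ref{lem:1} to each space via the interior-exclusion argument, solve the resulting quadratic explicitly in the Hilbert case, and in the $\mathcal{C}_{\rm b}$ case compare the slope $-1$ of the linear (duality-pairing) part against the Lipschitz constant $1-\eta<1$ of the sup-norm part to get existence and uniqueness of the root of (\ref{5.19}), then clamp to $[0,k\epsilon]$. Your added detail on concavity and the limits of $g$ at $\pm\infty$ only makes explicit what the paper's monotonicity comparison already implies.
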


\begin{proof}
First, the conclusion of Lemma \ref{lem:1} holds directly with $\mathbb{R}^{m}$ replaced by $\mathcal{X}=\mathbb{L}^{2}(\bar{\mathcal{I}};\mathbb{R})$, due to its reflexivity, using the norm $\|\cdot\|_{2}$ and the inner product $\langle\cdot,\cdot\rangle_{\bar{\mathcal{I}}}$. Then, we are led to solve the conditional quadratic equation
\begin{equation*}
  \langle f-a-yb,b\rangle_{\bar{\mathcal{I}}}=(1-\eta)\|f-a-yb\|_{2},\quad y\in\mathbb{R},
\end{equation*}
similar to in the proof of Theorem \ref{thm:3}, whose solution is uniquely determined as $y=\mathcal{H}^{\eta}_{a,b}(f)\in\mathbb{R}$ as in (\ref{5.17}), which projected into the range $[0,k\epsilon]\ni y$ with $k\in\mathbb{Z}_{++}$ given yields (\ref{5.16}).

Second, if $\mathcal{X}=\mathcal{C}_{\rm b}(\bar{\mathcal{I}};\mathbb{R})$ (which is nonreflexive), by way of contradiction again, suppose that, given $k\in\mathbb{Z}_{++}$, $y^{\ast}\in(0,k\epsilon)$ and $f-a-y^{\ast}b\in\Int_{\mathcal{C}_{\rm b}}K_{\mathcal{C}_{\rm b},\mathfrak{b}}(\eta)$. Then, by (\ref{5.15}), $(1-\eta)\|f-a-y^{\ast}b\|_{\infty}\leq c\int_{\bar{\mathcal{I}}}(f_{i}-a_{i}-y^{\ast}b_{i})b_{i}\dd i+\varepsilon_{1}$ for some $\varepsilon_{1}>0$. Then, the arguments in (\ref{4.1}) can be reproduced as
\begin{align*}
  (1-\eta)\|f-a-\hat{y}b\|_{\infty}&\leq(1-\eta)\|f-a-y^{\ast}b\|_{\infty}+\varepsilon_{2}(1-\eta) \\
  &\leq(1-\eta)\|f-a-y^{\ast}b\|_{\infty}+\varepsilon_{1}-\varepsilon_{2} \\
  &\leq c\int_{\bar{\mathcal{I}}}(f_{i}-a_{i}-y^{\ast}b_{i})b_{i}\dd i-\varepsilon_{2} \\
  &=c\int_{\bar{\mathcal{I}}}(f_{i}-a_{i}-\hat{y}b_{i})b_{i}\dd i,
\end{align*}
where $\varepsilon_{2}\in(0,\varepsilon_{1}/(2-\eta))$ exists such that $\hat{y}=y^{\ast}+\varepsilon_{2}\in(0,k\epsilon]$, and the last equality is possible thanks to condition (\ref{5.11}). This contradicts the maximality of $y^{\ast}$ from (\ref{5.12}). Therefore, $f-a-y^{\ast}b\in\pd_{\mathcal{C}_{\rm b}}K_{\mathcal{C}_{\rm b},\mathfrak{b}}(\eta)=\{\beta\in\mathcal{C}_{\rm b}(\bar{\mathcal{I}};\mathbb{R}):\;(1-\eta)\|\beta\|_{\infty}=\langle\beta,\mathfrak{b}\rangle_{\bar{\mathcal{I}}}\}$ as well.

The boundary condition herein gives rise to the following equation:
\begin{equation}\label{5.20}
  c\int_{\bar{\mathcal{I}}}(f_{i}-a_{i}-\hat{y}b_{i})b_{i}\dd i=(1-\eta)\sup_{i\in\bar{\mathcal{I}}}|f_{i}-a_{i}-yb_{i}|,\quad y\in\mathbb{R},
\end{equation}
which is generally transcendental due to the supremum operator, and in which $c\int_{\bar{\mathcal{I}}}(f_{i}-a_{i}-yb_{i})b_{i}\dd i=\langle f-a-yb,\mathfrak{b}\rangle_{\bar{\mathcal{I}}}$ for the scaling factor $c$ determined based on $\bar{\mathcal{I}}$.

While it does not seem possible to solve (\ref{5.20}) explicitly, we observe that its right side forms a Lipschitz-continuous function in $y\in\mathbb{R}$, with Lipschitz constant equal to $(1-\eta)\|b\|_{\infty}=1-\eta\in[0,1)$. The left side is a linear function (also in $y\in\mathbb{R}$) that is downward-sloping with slope magnitude $c\int_{\bar{\mathcal{I}}}b^{2}_{i}\dd i=1$, recalling (\ref{5.11}). Thus, (\ref{5.20}) must have a unique real solution, determined by (\ref{5.19}). By projection again, we obtain (\ref{5.18}), completing the proof.
\end{proof}

\smallskip

For the case $\mathcal{X}=\mathbb{L}^{2}(\bar{\mathcal{I}};\mathbb{R})$, we briefly discuss potential solution methods for (\ref{5.16}), considering $f:\mathcal{A}\mapsto\mathcal{X}$ an infinite-dimensional optimization objective function, along with the understanding that $D=\cl_{\mathcal{X}}\{f(x):\;x\in\mathcal{A}\}$. We adapt the procedures outlined in Section \ref{sec:4.2}.

By setting $\theta=(1-\eta)/\sqrt{\eta(2-\eta)}\geq0$ as in (\ref{4.7}) and choosing a sequence $\{b^{\perp,(j)}:\;j\in\mathbb{Z}_{++}\}\cup\{b\}\subset\overline{B}_{\mathbb{L}^{2}}(1)$ forming an orthonormal basis for $\mathbb{L}^{2}(\bar{\mathcal{I}};\mathbb{R})$, the relation (\ref{4.8}) becomes
\begin{equation}\label{5.21}
  F_{a,b}=\sup_{x\in\mathcal{A}}\Bigg\{\langle f(x)-a,b\rangle_{\bar{\mathcal{I}}}-\theta\sqrt{\sum^{\infty}_{j=1}\langle f(x)-a,b^{\perp,(j)}\rangle^{2}_{\bar{\mathcal{I}}}}\Bigg\};
\end{equation}
the validity of this transformation is due to (\ref{5.17}). Then, (\ref{5.21}) can be solved in two stages following the same idea as for (\ref{5.17}). It will necessarily lead to an (infinity) sequence of (linear) constraints to consider. More specifically, suppose that we solve the resultant unconstrained problem
\begin{equation}\label{5.22}
  G_{a,b}(l):=\sup_{x\in\mathcal{A}}\Bigg\{\langle f(x),b\rangle_{\bar{\mathcal{I}}}+\sum^{\infty}_{j=1}\lambda_{l_{j}}(\langle f(x),b^{\perp,(j)}\rangle_{\bar{\mathcal{I}}}-\langle a,b^{\perp,(j)}\rangle_{\bar{\mathcal{I}}}-l_{j})\Bigg\},
\end{equation}
where $l_{j},\lambda_{l_{j}}\in\mathbb{R}$, for every $j\in\mathbb{Z}_{++}$. Then, the Cauchy--Schwarz inequality immediately shows that a necessary and sufficient condition for the sum in (\ref{5.22}) to converge is that the associated sequences $l=\{l_{j}:\;j\in\mathbb{Z}_{++}\}$ and $\lambda_{l}=\{\lambda_{l_{j}}:\;j\in\mathbb{Z}_{++}\}$ belong to the space $\ell^{2}(\mathbb{R})$ of square-summable real sequences, which is also a Hilbert space, equipped with the norm $\|\cdot\|_{\ell^{2}}$. In this case, the required value function (\ref{5.21}) can be represented for each $k\in\mathbb{Z}_{++}$ as $V^{(k)}_{a,b}=\min\big\{F^{+}_{a,b},k\epsilon\big\}$, where
\begin{equation*}
  F_{a,b}=\sup_{l\in\ell^{2}(\mathbb{R})}\big\{G_{a,b}(l)-\langle a,b\rangle_{\bar{\mathcal{I}}}-\theta\|l\|_{\ell^{2}}\big\}.
\end{equation*}
This enables computing (\ref{5.16}) in terms of $V^{(k)}_{a,b}=\min\big\{F^{+}_{a,b},k\epsilon\big\}$.

\medskip

\section{Discussion on related applications}\label{sec:6}

In this section, we discuss several key applications where the proposed boundary characterization approach plays a significant role, building on the discussion in Section \ref{sec:1}. In the first part, we examine existing stochastic optimization and game-theoretic settings where the boundary of sets is either a primary objective or a subject of established interest and then use the characterization to derive compact representation formulas for these boundaries. In the second part, we establish new connections to machine learning problems (pertaining to the spatial structures of loss landscapes) and demonstrate how the characterization gives useful insights for this domain.\footnote{All of these applications could be explored further in their respective settings and are not exhaustive -- many other fields, such as trajectory planning in robotics and energy landscape analysis in materials science, may also benefit from this characterization.}

\smallskip


\subsection{Direct applications: Multiplicity in optimization and games}\label{sec:6.1}

First, consider the motivating problem at the beginning of Section \ref{sec:5}. In this scenario, the control variable in (\ref{4.5}), in a generalized sense, may be understood as the strategy profile $x=\alpha^{\star}\equiv(\alpha^{\star}_{1},\dots,\alpha^{\star}_{N})$, which takes values in the set
\begin{equation*}
  \mathcal{A}=\Bigg\{\alpha^{\star}\in\prod^{N}_{i=1}A_{i}:\;J_{i}(\alpha^{\star}_{i},\alpha^{\star}_{\neg i})=\sup_{\alpha_{i}\in A_{i}}J_{i}(\alpha_{i},\alpha^{\star}_{\neg i}),\;i\in\mathbb{Z}\cap[1,N]\Bigg\},
\end{equation*}
containing all equilibrium strategies (which itself is to be obtained using a fixed-point argument). Interest is in characterizing the set $D=\cl\{J(\alpha^{\star}):\alpha^{\star}\in\mathcal{A}\}\subset\mathbb{R}^{N}$ of game values,\footnote{To be clear, $D$ represents the game's \textsl{raw} set values, used here for illustrative convenience, in contrast to the actual set value which is derived from the limit of the values of $\epsilon$-equilibria; we refer to \cite[\S2]{IZ24a} for technical details.} where $J(\alpha^{\star})$ is the same as $(J_{1}(\alpha^{\star}_{1}),\dots,J_{N}(\alpha^{\star}_{N}))$.

To this end, assume that $r\to\infty$ is allowed in condition (\ref{3.1}). Then, by Theorem \ref{thm:1} and Theorem \ref{thm:3}, the boundary of $D$ can then be expressed as
\begin{equation}\label{6.1.1}
  \pd D=\cl\Big\{J(\alpha^{\ast}):\;\alpha^{\ast}\in\underset{\alpha^{\star}\in\mathcal{A}}{\arg\max}H^{\eta}_{a,b}(J(\alpha^{\star})),\;\exists a\in\mathbb{R}^{N},\;b\in\pd\overline{B}(1)\text{ s.t. }H^{\eta}_{a,b}(J(\alpha^{\ast}))\geq0\Big\},
\end{equation}
where, again, $H^{\eta}_{a,b}$ is given by (\ref{4.3}). Note that the set $D$ may very well be countable or even finite, and we simply have $\pd D=D$ (in $\mathbb{R}^{N}$), and the characterization of the boundary means the whole set itself. Thus, the representation (\ref{6.1.1}) is particularly useful when $D$ is known or reasonably suspected to have continuum cardinality with (multiple) connectedness.

In mean-field game scenarios where there are infinitely many players, similar representations are immediately available based on the infinite-dimensional setting discussed in Section \ref{sec:5}, depending on the emphasis on integrability or continuity.

\smallskip

Second, in multi-criteria stochastic optimization problems, a useful tool for dealing with potential time inconsistency issues, i.e., the temporal sub-optimality of previously optimal controls, consists in characterizing the dynamics of the (multi-criteria) objective function; see, again, 
\cite{IZ24b} for details.

More precisely, consider a decision-maker with a finite number $m\geq2$ of objective functions (or a group of $m$ decision-makers each with a single objective function). Given a $d$-dimensional Brownian motion in $[0,T]$, at each initial state $(t,z)\in[0,T)\times\mathbb{R}^{d}$, the decision-maker chooses a control $\alpha$ from the admissibility set $\mathcal{A}_{t}$ consisting of progressively measurable processes with values in a certain Euclidean space. His stochastic optimization is characterized with the following controlled dynamics:
\begin{align*}
  X^{t,z,\alpha}_{v}&=z+\int^{v}_{t}\mu(s,X^{t,z,\alpha}_{s},\alpha_{s})\dd s+\int^{v}_{t}\sigma(s,X^{t,z,\alpha}_{s},\alpha_{s})\dd W_{s},\\
  Y^{t,z,\alpha}_{v}&=U(X^{t,z,\alpha}_{T})+\int^{T}_{v}u(s,X^{t,z,\alpha}_{s},Y^{t,z,\alpha}_{s},Z^{t,z,\alpha}_{s},\alpha_{s}\big)\dd s-\int^{T}_{v}Z^{t,z,\alpha}_{s}\dd W_{s},
\end{align*}
where the processes $X$ (state), $Y$ (objective), and $Z$ (auxiliary) are valued in $\mathbb{R}^{d}$, $\mathbb{R}^{m}$, and $\mathbb{R}^{m\times d}$, respectively, and $\mu$ and $\sigma$ are respectively the drift and volatility coefficients of appropriate dimensions, and $u$ and $U$ are his objective functions with values in $\mathbb{R}^{m}$.

In this context, attention is drawn to the set value function $D\equiv D^{t,z}=\cl\{Y^{t,z,\alpha}_{t}:\alpha\in\mathcal{A}_{t}\}$. By Theorem \ref{thm:3}, its boundary has the representation
\begin{align}\label{6.2.1}
  \pd D^{t,z}&=\cl\Big\{Y^{t,z,\alpha^{\ast}}_{t}:\;\alpha^{\ast}\in\underset{\alpha\in\mathcal{A}_{t}}{\arg\max}H^{\eta}_{a,b}(Y^{t,z,\alpha}_{t}),\; \exists a\in\mathbb{R}^{m},\;b\in\pd\overline{B}(1)\text{ s.t. }H^{\eta}_{a,b}(Y^{t,z,\alpha^{\ast}}_{t})\geq0\Big\}, \nonumber\\
  &\quad(t,z)\in[0,T]\times\mathbb{R}^{m},
\end{align}
assuming that for every $(t,z)$, the set value $D^{t,z}$ satisfies (\ref{3.1}) with $r\to\infty$. Clearly, the maximization in (\ref{6.2.1}) is standard stochastic optimization, making it well fit into the set-valued dynamic programming principle (
\cite[\text{Thm.} 3.1]{IZ24b}).

In case there are infinitely many criteria, depending on whether such criteria satisfy certain continuity or integrability property -- with respect to the criterion label or index (see below), (\ref{6.2.1}) readily generalizes to the function-valued parameters $a$ and $b$, as was discussed in Section \ref{sec:5}.

\smallskip

Third, for multi-criteria optimization in infinite stochastic dimensions (
\cite{LM20} and 
\cite{X24}), the boundary of interest is that of some set of parameters governing the time-invariant dimensionality, which represents either Knightian uncertainty or preference indecisiveness.

A notable example is the consumption--investment problem in markets with multiple goods (count $n\geq2$). More specifically, consider a representative agent who is indecisive among goods; her preferences are incomplete and admit the so-called ``multi-utility representation theorem'' (see 
\cite{EO11}). The multi-utility is then associated with a collection of classical utility functions $\{u_{i}(c):i\in\mathcal{I}\}$, where $c\in\mathbb{R}^{n}$, $n\geq2$, is an $n$-vector of good-specific consumption and $\mathcal{I}\subset\mathbb{R}^{m}$, $m\geq1$, is a utility index set containing $m$ utility parameters. The stochastic dimensionality feature appears if the agent's preferences can change over time -- exogenously or otherwise -- conforming to certain set-valued dynamics; see 
\cite[\S6]{X25} for a general description. In this scenario, the boundary $\pd\mathcal{I}$ plays a crucial role in computing the optimal investment policies based on set-valued Malliavin calculus (
\cite[\text{Thm.} 4]{X24}).

For instance, a possibly nonconvex utility index set $\mathcal{I}$ that encodes influences of market characteristics on the agent's preference variations over a time interval $[0,T]$ can be taken as the (exogenous) Aumann integral
\begin{equation*}
  \mathcal{I}_{t}=\int^{t}_{0}X_{s}\dd s,\quad t\in[0,T],
\end{equation*}
where $X$ is a non-anticipating function with compact (not necessarily convex) values; for simplicity we still assume that $r\to\infty$ is permissible in condition (\ref{3.1}). Then, the boundary of the utility index set in $\mathbb{R}^{m}$ at any time point $t\in[0,T]$ can be represented as ($\PP$-a.s.)
\begin{equation*}
  \pd\mathcal{I}_{t}=\cl\bigg\{\int^{t}_{0}x_{s}\dd s:\;x^{\ast}\in\underset{x\in X_{[0,t]}}{\arg\max}H^{\eta}_{a,b}\bigg(\int^{t}_{0}x_{s}\dd s\bigg),\;\exists a\in\mathbb{R}^{m},\;b\in\pd\overline{B}(1)\text{ s.t. }H^{\eta}_{a,b}\bigg(\int^{t}_{0}x^{\ast}_{s}\dd s\bigg)\geq0\bigg\}.
\end{equation*}
Since $\mathcal{A}=X_{[0,t]}$ is path-dependent, as is $\mathcal{I}_{t}$, the maximization should be understood as deterministic optimization, significantly lifting simulation burdens (
\cite[\S6]{X24}).

To some extent, the same problem may be analogized to general incomplete-market equilibrium models with a large number of heterogeneous agents labeled by $m$ traits, which have a time-dependent distribution. Then, the (random) index set $\mathcal{I}\subset\mathbb{R}^{m}$ may be interpreted as the (dynamic) collection of labels for such agents in a certain state; see 
\cite[\S2]{L25} for a comprehensive viewpoint. In this case, the boundary (surface) $\pd\mathcal{I}$ is sufficient to determine the agents' law of motion -- in a lower dimension.

\smallskip

\subsection{New connections: Constrained reinforcement learning}\label{sec:6.2}

In reinforcement learning (RL), the space of feasible policies, namely the set of all possible mappings from predetermined states to actions, tend to be highly nonconvex. In particular, in continuous control problems or in cases where policies are represented by deep neural networks, the space of admissible policies (which is straightly related to the loss landscape) is oftentimes nonlinear and constrained by dynamics that are nonconvex in nature.

Plainly speaking, the set of all admissible policies $\mathcal{P}$ represents policies that satisfy the constraints of the environment.\footnote{Some standard examples include avoiding obstacles in a robotic control task, staying within a certain financial budget, or achieving certain safety conditions.} Of special interest in constrained RL is the boundary of this set, where the goal is to maximize some reward while remaining within the admissible region (domain).

For instance, consider deterministic parameterized policies $\pi_{\theta}$ that maps states $s\in\mathcal{S}$ into actions $\alpha\in\mathcal{A}$, where $\mathcal{S}$ is the state space and $\mathcal{A}$ is the action space, with $\theta$ being parameters (e.g., from a neural network architecture) taking values in some parameter space $\Theta$. Then, the admissible policy space (or feasible region) is given by
\begin{equation*}
  \mathcal{P}=\cl\{(\pi_{\theta}:\mathcal{S}\mapsto\mathcal{A}):\;\theta\in\mathrm{\Theta}_{\text{constr}}\},
\end{equation*}
where $\mathrm{\Theta}_{\text{constr}}$ contains constraints on the parameters $\theta$, generating a highly nonconvex feasible region of policies (as a neuro-manifold (
\cite{C20})). Finding policies on the boundary of this space, which contains key information for the optimality of objectives, is a rather difficult problem, as traditional methods like policy gradient or Q-learning may struggle to handle the nonconvexity of $\mathcal{P}$; we refer to the discussion in 
\cite{HXY23}.

Given this highly nonconvex nature, we do not assume that $r\to\infty$ in condition (\ref{3.1}) for practicality. Based on Theorem \ref{thm:1} and Theorem \ref{thm:3}, the boundary of the policy space (or the neuro-manifold) can then be computed as
\begin{align}\label{6.3.1}
  \pd\mathcal{P}&=\cl_{\#}\bigg\{\pi_{\theta^{\ast}}:\; \theta^{\ast}\in\underset{\theta\in\Theta_{\text{constr}}}{\arg\max}\min\bigg\{(\mathcal{H}^{\eta}_{a,b})^{+}(\pi_{\theta}),\frac{kr}{3}\bigg\}, \;\exists a\in\#,\;b\in\overline{B}_{\#'}(1),\;k\in\mathbb{Z}\cap[1,k^{\ast}] \nonumber\\
  &\quad\text{ s.t. }\min\bigg\{(\mathcal{H}^{\eta}_{a,b})^{+}(\pi_{\theta^{\ast}}),\frac{(k+1)r}{3}\bigg\} =\min\bigg\{(\mathcal{H}^{\eta}_{a,b})^{+}(\pi_{\theta^{\ast}}),\frac{kr}{3}\bigg\}\bigg\} \nonumber\\
  &=\cl_{\#}\bigg\{\pi_{\theta^{\ast}}:\; \theta^{\ast}\in\underset{\theta\in\Theta_{\text{constr}}}{\arg\max}\mathcal{H}^{\eta}_{a,b}(\pi_{\theta}),\;\exists a\in\#,\;b\in\overline{B}_{\#'}(1),\;k\in\mathbb{Z}\cap[1,k^{\ast}] \nonumber\\
  &\quad\text{ s.t. }H^{\eta}_{a,b}(\pi_{\theta^{\ast}})\in\bigg[0,\frac{kr}{3}\bigg]\bigg\},
\end{align}
where $k^{\ast}$ is some (verifiably) sufficiently large number such that $\sup_{\theta\in\Theta_{\text{constr}}}(H^{\eta}_{a,b})^{+}(\pi_{\theta})\leq k^{\ast}r/3$ for all parameters $a$ and $b$. Depending on the structure of $\mathcal{S}$ and $\mathcal{A}$, the dimensionality of $a$ and $b$ can be either finite or infinite, hence the use of the symbol $\#$ as a placeholder (with topological dual $\#'$). If the constraints imposed on $\theta$ (or indirectly on $\pi$) are all hard constraints, then the optimal policies should generally lie on the boundary $\pd\mathcal{P}$, for which using the representation (\ref{6.3.1}) greatly facilitates the subsequent training process.\footnote{In RL tasks with safety constraints (such as safe autonomous driving), the feasible region has complex boundary geometry (e.g., disconnectedness) due to intricate state dynamics. Just as an example, consider if the policy has to ensure the car stays on the road while avoiding pedestrians, and then the policy space will be limited by dynamic constraints that are highly nonlinear and nonconvex.}

\medskip

\section{Concluding remarks}\label{sec:7}

We have proposed a general method for characterizing the boundaries of generally nonconvex domains (Theorem \ref{thm:1}, Theorem \ref{thm:4}). Considering the boundary search process a collection of multi-criteria optimization problems, the characterization makes use of indefinite, possibly bounded spherical cones (Definition \ref{def:1}) to deal with highly nonconvex regions, which are only required to satisfy a natural exterior cone condition (Assumption \ref{as:1}, Assumption \ref{as:2}, or Assumption \ref{as:3}). Alternatively, the characterization can be seen as a general dual representation of nonconvex sets (or boundaries), significantly extending existing results on convex sets using support functions, as well as advancements in nonconvex duality theory. With additional boundedness conditions, the dimensionality of the parametric range can be progressively reduced, allowing to further simplify the characterization (Theorem \ref{thm:2} and Corollary \ref{cor:1}).

Employing nonlinear scalarization functions of Pascoletti--Serafini type, we have shown that the multi-criteria optimization problem, for fixed parametrization, is equivalent to a single-criterion constrained nonconvex optimization problem. With this particular formulation, an explicit problem can be established and easily solved following standard procedures in (deterministic or stochastic) optimization (Theorem \ref{thm:3} and Theorem \ref{thm:5}).

Further research could examine rare situations where the exterior cone condition is violated -- locally. In this case, an approximating sequence of spherical cones with radius $r$ and sharpness $\eta$ both tending to 0 may be used for a theoretical justification. Additionally, while we have demonstrated the numerical implementation of the method discussed in Section \ref{sec:4.3}, further efforts can be directed toward developing efficient algorithms for solving the resultant problem in (\ref{4.2}) with (\ref{4.3}), particularly in stochastic settings, potentially leveraging deep learning techniques. Last but not least, each application discussed in Section \ref{sec:6} presents a problem of independent interest, also warranting the design of tailored algorithms.

\bigskip


\begin{thebibliography}{99}\small
\bibitem{ADP98} Aubin, J.P. \& Da Prato, G.D. (1998). The viability theorem for stochastic differential inclusions. \textsl{Stochastic Analysis and Applications}, 16(1), 1--15.
\bibitem{AG02} Azimov, A.Y., \& Gasimov, R.N. (2002). Stability and duality of nonconvex problems via augmented Lagrangian. \textsl{Cybernetics and Systems Analysis}, 38, 412--421.
\bibitem{B77} Balder, E.J. (1977). An extension of duality-stability relations to nonconvex optimization problems. \textsl{SIAM Journal on Control and Optimization}, 15(2), 329--343.
\bibitem{BSS93} Barles, G., Soner, H.M. \& Souganidis, P.E. (1993). Front propagation and phase field theory. \textsl{SIAM Journal on Control and Optimization}, 31(2), 439--469.
\bibitem{B67} Borges, C.J.R. (1967). A study of multivalued functions. \textsl{Pacific Journal of Mathematics}, 23(3), 451--461.
\bibitem{BBK13} Bramson, M., Burdzy, K., \& Kendall, W. (2013). Shy couplings, CAT(0) spaces, and the lion and man. \textsl{The Annals of Probability}, 41(2), 744--784.
\bibitem{BT00} Bouchard, B. \& Touzi, N. (2000). Explicit solution to the multivariate super-replication problem under transaction costs. \textsl{The Annals of Applied Probability}, 10(3), 685--708.
\bibitem{C20} Calin, O. (2020). Neuromanifolds. In: \textsl{Deep Learning Architectures: A Mathematical Approach}, pp. 465--504.
\bibitem{CS05} Chen, Z.Q., \& Song, R. (2005). Two-sided eigenvalue estimates for subordinate processes in domains. \textsl{Journal of Functional Analysis}, 226(1), 90--113.
\bibitem{E05} Ehrgott, M. \textsl{Multicriteria Optimization} (Vol. 491). Springer Science \& Business Media, 2005.
\bibitem{E08} Eichfelder, G. \textsl{Adaptive scalarization methods in multiobjective optimization} (Vol. 436). Berlin: Springer, 2008.
\bibitem{E09} Eichfelder, G. (2009). An adaptive scalarization method in multiobjective optimization. \textsl{SIAM Journal on Optimization}, 19(4), 1694--1718.
\bibitem{ES91} Evans, L.C. \& Spruck, J. (1991). Motion of level sets by mean curvature. I. \textsl{Journal of Geometric Analysis}, 33(3), 635--681.
\bibitem{EO11} Evren, \"{O}., \& Ok, E.A. (2011). On the multi-utility representation of preference relations. \textsl{Journal of Mathematical Economics}, 47(4-5), 554--563.
\bibitem{FR15} Feinstein, Z. \&  Rudloff, B. (2015). Multi-portfolio time consistency for set-valued convex and coherent risk measures. \textsl{Finance and Stochastics}, 19(1), 67--107.
\bibitem{FRZ22} Feinstein, Z., Rudloff, B., \& Zhang, J. (2022). Dynamic set values for nonzero-sum games with multiple equilibriums. \textsl{Mathematics of Operations Research}, 47(1), 616--642.
\bibitem{F-BM13} Flores-Bazan, F., \& Mastroeni, G. (2013). Strong duality in cone constrained nonconvex optimization. \textsl{SIAM Journal on Optimization}, 23(1), 153--169.
\bibitem{GS55} Gass, S.I., \& Saaty, T.L. (1955). Parametric objective function (part 2)--generalization. \textsl{Journal of the Operations Research Society of America}, 3(4), 395--401.
\bibitem{G06} Giga, Y. \textsl{Surface evolution equations: A level set approach}, \textsl{Monographs in Mathematics} (Vol. 99), Birkh\"auser Basel, Basel, 2006.
\bibitem{HXY23} Hambly, B., Xu, R., \& Yang, H. (2023). Recent advances in reinforcement learning in finance. \textsl{Mathematical Finance}, 33(3), 437--503.
\bibitem{HHLRS15} Hamel, A.H., Heyde, F., L\"{o}hne, A., Rudloff, B., \& Schrage, C. (2015). Set optimization -- a rather short introduction. In: \textsl{Set Optimization and Applications -- The State of the Art: From Set Relations to Set-Valued Risk Measures} (pp. 65--141). Springer Berlin Heidelberg.
\bibitem{IZ24a} \.{I}\c{s}eri, M., \& Zhang, J. (2024a). Set values for mean field games. \textsl{Transactions of the American Mathematical Society}, 377(10), 7117--7174.
\bibitem{IZ24b} \.{I}\c{s}eri, M., \& Zhang, J. (2024b). Set-valued Hamilton--Jacobi--Bellman equations. \textsl{arXiv preprint}, arXiv:2311.05727.v2.
\bibitem{J11} Jahn, J. \textsl{Vector Optimization} (2nd Ed.) (pp. 2327--4697). Berlin: Springer, 2011.
\bibitem{K99} Kabanov, Y.M. (1999). Hedging and liquidation under transaction costs in currency markets. \textsl{Finance and Stochastics}, 3(2), 237--248.
\bibitem{KMZ17} Karnam, C., Ma, J., \& Zhang, J. (2017). Dynamic approaches for some time-inconsistent optimization problems. \textsl{The Annals of Applied Probability}, 27(6), 3435--3477.
\bibitem{KOKYE19} Kasimbeyli, R., Ozturk, Z.K., Kasimbeyli, N., Yalcin, G.D., \& Erdem, B.I. (2019). Comparison of some scalarization methods in multiobjective optimization: comparison of scalarization methods. \textsl{Bulletin of the Malaysian Mathematical Sciences Society}, 42, 1875--1905.
\bibitem{LXTSG18} Li, H., Xu, Z., Taylor, G., Studer, C., \& Goldstein, T. (2018). Visualizing the loss landscape of neural nets. \textsl{Advances in Neural Information Processing Systems}, 31.
\bibitem{LM20} Liang, Z., \& Ma, M. (2020). Robust consumption--investment problem under CRRA and CARA utilities with time-varying confidence sets. \textsl{Mathematical Finance}, 30(3), 1035--1072.
\bibitem{L25} Lyasoff, A. (2025). The time-interlaced self-consistent master system of heterogeneous-agent models. \textsl{arXiv preprint}, arXiv:2303.12567.v11.
\bibitem{MSMTK25} Marchetti, G.L., Shahverdi, V., Mereta, S., Trager, M., \& Kohn, K. (2025). An invitation to neuroalgebraic geometry. \textsl{arXiv preprint}, arXiv:2501.18915.
\bibitem{M-LV99} Mart\'{\i}nez-Legaz, J.E., \& Volle, M. (1999). Duality in DC programming: the case of several DC constraints. \textsl{Journal of Mathematical Analysis and Applications}, 237(2), 657--671.
\bibitem{MTVF24} Milani, S., Topin, N., Veloso, M., \& Fang, F. (2024). Explainable reinforcement learning: A survey and comparative review. \textsl{ACM Computing Surveys}, 56(7), 1--36.
\bibitem{PS84} Pascoletti, A., \& Serafini, P. (1984). Scalarizing vector optimization problems. \textsl{Journal of Optimization Theory and Applications}, 42, 499--524.
\bibitem{R74} Rockafellar, R.T. \textsl{Conjugate Duality and Optimization}. Society for Industrial and Applied Mathematics, 1974.
\bibitem{RW09} Rockafellar, R.T., \& Wets, R.J.B. \textsl{Variational Analysis} (Vol. 317). Springer Science \& Business Media, 2009.
\bibitem{S87} Saisho, Y. (1987). Stochastic differential equations for multi-dimensional domain with reflecting boundary. \textsl{Probability Theory and Related Fields}, 74(3), 455--477.
\bibitem{S85} Sethian, J.A. (1985). Curvature and the evolution of fronts. \textsl{Communications in Mathematical Physics}, 101(4), 487--499.
\bibitem{S09} Sharikov, E. (2009). On generalized conjugations and subdifferentials in abstract convex analysis. \textsl{Optimization}, 58(5), 599--610.
\bibitem{S93} Soner, H.M. (1993). Motion of a set by the curvature of its boundary. \textsl{Journal of Differential Equations}, 101(2), 313--372.
\bibitem{ST02} Soner, H.M. \&  Touzi, N. (2002). Dynamic programming for stochastic target problems and geometric flows. \textsl{Journal of the European Mathematical Society}, 4(3), 201--236.
\bibitem{ST03} Soner, H.M. \&  Touzi, N. (2003). A stochastic representation for mean curvature type geometric flows. \textsl{The Annals of Probability}, 31(3), 1145--1165.
\bibitem{T78} Toland, J.F. (1978). Duality in nonconvex optimization. \textsl{Journal of Mathematical Analysis and Applications}, 66(2), 399--415.
\bibitem{X24} Xia, W. (2024). Optimal consumption--investment problems under time-varying incomplete preferences. \textsl{arXiv preprint}, arXiv:2312.00266.v2.
\bibitem{X25} Xia, W. (2025). Set-valued stochastic integrals for convoluted L\'{e}vy processes. \textsl{Journal of Mathematical Analysis and Applications}, 545(1), 129076.
\bibitem{YK24} Yalcin, G.D., \& Kasimbeyli, R. (2024). Generalized derivatives and optimality conditions in nonconvex optimization. \textsl{Bulletin of the Malaysian Mathematical Sciences Society}, 47(3), 81.
\bibitem{YH01} Yang, X.Q., \& Huang, X.X. (2001). A nonlinear Lagrangian approach to constrained optimization problems. \textsl{SIAM Journal on Optimization}, 11(4), 1119--1144.
\end{thebibliography}
\end{document}